\theoremstyle{plain}
\newtheorem{theorem}{Theorem}[section]
\newtheorem{corollary}[theorem]{Corollary}
\newtheorem{proposition}[theorem]{Proposition}
\theoremstyle{definition}
\newtheorem{definition}[theorem]{Definition}
\newtheorem{example}[theorem]{Example}
\newcommand{\ep}{\scriptsize\mbox{\textcircled{$\dagger$}}}
\newcommand{\cnn}{\mathbb{C}^{n\times n}}
\theoremstyle{remark}
\newtheorem{remark}{Remark}
\begin{document}

\title{G-Drazin inverse combined with inner  
inverse}

\author{
\name{G. Maharana\textsuperscript{a,1},  J. K. Sahoo\textsuperscript{a,2} and N\'estor Thome\textsuperscript{b}\thanks{CONTACT N.~Thome. Email: njthome@mat.upv.es }}
\affil{\textsuperscript{a} Department of Mathematics, BITS Pilani, K. K. Birla Goa Campus, Zuarinagar-403726, Goa, India. \textsuperscript{1}Email:p20180028@goa.bits-pilani.ac.in,~\textsuperscript{2}Email:jksahoo@goa.bits-pilani.ac.in\\ \textsuperscript{b}Instituto Universitario de Matem\'atica Multidisciplinar, Universitat Polit\`ecnica de Val\`encia, Camino de Vera, 14, Valencia, 46022, Spain}
}

\maketitle

\begin{abstract}
This paper introduces new classes of generalized inverses for square matrices named GD1, and the dual, called 1GD inverse. In addition, we discuss a few characterizations and representations of these inverses. The explicit expressions of these inverses have been established via core-nilpotent decomposition. Further, we introduce a binary relation for GD1 inverse and 1GD inverse, along with a few derived properties.
\end{abstract}

\begin{keywords}
Drazin inverse; outer inverse; inner inverse; G-Drazin inverse; GDMP-inverse
\end{keywords}

\section{Introduction and background}\label{sec1}
 M. P. Drazin \cite{draz} first introduced the Drazin inverse in associative rings and semigroups. Since then various applications of the Drazin inverse can be found in \cite{campbell2009generalized,campbell1976application,cline1980drazin,greville1974generalized,hartwig1978schur,nashed1992drazin,simeon1993drazin}. The Drazin inverse for bounded linear operators on complex Banach spaces was presented in \cite{caradus1978generalized,king1977note,lay1975spectral}. In \cite{mosic2018gdmp}, the gDMP inverse operator for a Hilbert space was discussed as an extension of the DMP inverse of a complex square matrix investigated in \cite{malik2014new}, and this appeared as an extension of the core inverse, which can be found in \cite{baksalary2010core}. Further, the DMP inverse has been generalized to a rectangular matrix, called W-weighted DMP inverse, which was established by Meng in \cite{meng2017dmp}. A deeper analysis of DMP inverse can be found in the cite \cite{MaGaSt} by  Ma, Gao and Stanimirovi\'{c}.  A recent generalized inverse, the CMP inverse (defined by $A^{\dagger}AA^DAA^{\dagger}$) of a square matrix $A$, was introduced by Mehdipour and Salemi \cite{MahSal}. A few characterizations and representations of the CMP inverse and its application are presented in \cite{ma2021characterizations}. The G-Drazin inverse for a square matrix was introduced in  \cite{wang2016partial} and then extended to operators in Banach spaces \cite{mosic2019weighted}.  The WgDMP inverse and dual WgDMP inverse for Hilbert space bounded linear operators were discussed in \cite{mosic2018weighted}. Representation of the G-Drazin inverse in a Banach algebra can be found in \cite{chen2019representation}. The extension of the notion of G-Drazin inverses to rectangular matrices along with a weight matrix was discussed by Coll et al. \cite{coll2018weighted}.  Generalized inverses for arbitrary index and square matrices have been discussed in \cite{malik2014new,manjunatha2014core,MahSal}. In \cite{mp1}, 1MP, and MP1 generalized inverses and their induced partial orders are discussed. Related to outer inverses, 2MP, MP2, and C2MP-inverses for rectangular matrices have been  studied in \cite{mp2}. The GDMP-inverse and its dual for square matrices were discussed in \cite{GDMP22}.

Motivated by work of \cite{mp1,GDMP22,mp2,D1}, in this paper we introduce and study a few characterizations of the GD1 inverse and its dual. The main contributions of this paper are the following:
\begin{enumerate}
\item[$\bullet$] Introduce two new classes of generalized inverses named as GD1 and the dual 1GD inverse.

\item[$\bullet$] A few characterizations and representations for these inverses are established.

\item[$\bullet$] Binary relations for these inverses introduced along with a few properties derived.
\end{enumerate}

 \subsection{Preliminaries}

In this subsection, we recall some notations, definitions and known results which will be used in proving our main results. Throughout this manuscript,  $\mathbb{C}^{m\times n }$ stands for the  set of  complex matrices of order  $m \times n$. The notation  $R(A),~N(A)$, and $A^*$  will denote the range space, the null space and the conjugate transpose of  a matrix $A \in \mathbb{C}^{m \times n}$ respectively. The smallest nonnegative integer $k$ such that $rank(A^k)=rank(A^{k+1})$  is known as the index of the matrix $A\in \mathbb{C}^{n\times n }$  and denoted  by $ind(A)$. The matrix $I$ will denote the identity of adequate size.

\subsection{A brief recapitulation of generalized inverses}
For a matrix $A\in\mathbb{C}^{m\times n}$, if a matrix $X$ satisfies $AXA=A$ then $X$ is called an inner inverse or generalized inverse of $A$. Further, if $XAX=X$, then $X$ is called outer inverse of $A$ and denoted by $A^{(2)}$. The set of all inner inverses of $A$ is denoted by $A\{1\}$ and an element of $A\{1\}$ is denoted as $A^{-}$.  Similarly, the set of all outer inverses of $A$ is denoted by $A\{2\}$. The Moore-Penrose inverse of a matrix $A$ is the unique matrix $X$ satisfying $AXA=A$, $XAX=X$ $(AX)^*=AX$, $(XA)^*=XA$ and denoted by $A^{\dagger}$. A matrix $X$ is called the Drazin  inverse of $A \in \mathbb{C}^{n\times n }$ (where $ind(A)=k$)  if it satisfies $XAX=X, XA=AX$, and $XA^{k+1} =A^k$, and we denote it by $A^D$.

Now, we will discuss a few composite generalized inverses which have been developed very recently. The 1MP inverse \cite{mp1}  of an arbitrary matrix $A$ is denoted as $A^{1MP}$ and defined by $A^{1MP}=A^{-}AA^{\dagger}$, where $A^-\in A\{1\}$. The dual of 1MP inverse, called MP1 inverse, is similarly denoted by $A^{MP1}$ and defined as $A^{MP1} = A^{\dagger}AA^{-}$. Further, the concept of 1MP and MP1 has extended to 2MP, MP2, and C2MP inverses as defined in what follows.
\begin{definition}\cite[Definition 2.1, Definition 3.1 and Definition 4.1]{mp2}
 Let  $A\in\mathbb{C}^{m\times n}$ and $A^{(2)}\in A\{2\}$.
 \begin{enumerate}
     \item[(a)] The 2MP inverse of $A$ is denoted by $A^{2MP}$ and defined as $A^{2MP}=A^{(2)}AA^{\dagger}$.
    \item[(b)] The MP2 inverse of $A$ is denoted by $A^{MP2}$ and defined as $A^{MP2}=A^{\dagger}AA^{(2)}$.
    \item[(c)] The C2MP inverse of $A$ is denoted by $A^{C2MP}$ and defined as $A^{C2MP}=A^{\dagger}AA^{(2)}AA^{\dagger}$.
 \end{enumerate}
\end{definition}
For arbitrary index square matrices, the combination of the Drazin inverse and the Moore-Penrose inverse, is renamed as DMP inverse \cite{malik2014new} and its dual called MPD inverse, which are defined below.
\begin{definition}\cite{malik2014new}
 Let  $A\in\mathbb{C}^{n\times n}$ with $ind(A)=k$.
 \begin{enumerate}
     \item[(a)] The DMP inverse of $A$ is denoted by $A^{D,\dagger}$ and defined as $A^{D,\dagger}=A^{D}AA^{\dagger}$.
    \item[(b)] The MPD inverse of $A$ is denoted by $A^{\dagger,D}$ and defined as $A^{\dagger,D}=A^{\dagger}AA^{D}$.
    \end{enumerate}
\end{definition}
 For a fixed inner inverse $A^-$, the 1D inverse  of a square matrix $A$ is denoted by $A^{-,D}$ and defined as $A^{-,D}=A^{-}AA^D$ \cite{D1}. Similarly, the dual of 1D inverse, called D1 inverse, is denoted by $A^{D,-}$ and defined as $A^{D,-} = A^DAA^{-}$.

Next, we recall the definition of G-Drazin inverse for square matrices.
\begin{definition} \cite{wang2016partial}
Let $A\in \mathbb{C}^{n \times n}$ and $ind(A)=k$. A matrix $X \in \mathbb{C}^{n\times n}$ is called a G-Drazin inverse of $A$ if it satisfies
\begin{equation}\label{GD_inverse}
 AXA=A, \quad XA^{k+1}=A^k, \quad \text{ and } \quad  A^{k+1}X=A^k.
\end{equation}
\end{definition}
We denote the set of all the G-Drazin inverses of $A$
by $A\{GD\}$ and an element of $A\{GD\}$ by $A^{GD}$. Note that G-Drazin inverse of an arbitrary index matrix always exists but need not be unique. In 2018, it was proved that the three equations in (\ref{GD_inverse}) can be reduced to the following two $AXA=A$ and $A^kX=XA^k$ \cite{coll2018weighted}. For further details on G-Drazin inverses, we refer the reader to \cite{weakdraz,coll2018weighted,mosic2019weighted,wang2016partial}.

 Drazin in \cite{Draz12} and \cite{Drazin2}  introduced $(b, c)$-inverses in the setting of a semigroup, which is a generalization of Moore-Penrose inverse. In order to state the matrix version of $(B,C)$-inverses we provide the following definition:
\begin{definition}\label{BC-invofA}
Let $A,~B,~C\in \mathbb{C}^{n \times n}$. A matrix  $X\in \mathbb{C}^{n \times n}$ is called  the $(B,C)$-inverse of $A$ if it satisfies
\begin{center}
$XAB=B,~CAX = C,~N(C)\subseteq N(X),~R(X) \subseteq R(B)$.
\end{center}
\end{definition}
It is known that if it exists, is unique.

Now, we recall one-sided partial orders and a few results on these relations.  
\begin{definition}\cite[Definition 6.3.1 and Corollary 6.3.10]{core-nil}\label{sharp-def}
  Let $A, B \in\mathbb{C}^{n\times n}$ be matrices of index at most 1. Then we will say that
  \begin{enumerate}
      \item[(i)] $A$ is below $B$ under the left sharp partial order ``${\#}\leq $" (we write $A{\#}\leq B$) if $A^2=AB$ and $R(A)\subseteq R(B)$. 
      \item[(ii)]  $A$ is below $B$ under the right sharp partial order ``$\leq{\#} $" (we write $ A\leq{\#} B$) if $A^2=BA$ and $R(A^*)\subseteq R(B^*)$. 
  \end{enumerate}
\end{definition}
\begin{definition}\cite{D1}
    Let $A, B\in\cnn$ with $ind(A)=k$.  We will say that $A$ is  below $B$ under the relation  $\leq ^ {D,-}$ if $A^{D,-}A = A^{D,-}B$ and $AA^{D,-} = BA^{D,-}$, and it is denoted by $A \leq^{D,-} B$.
\end{definition}
\begin{theorem}\cite{D1}\label{right-par}
Let $A, B \in\mathbb{C}^{n\times n}$ be matrices of index at most 1. If $A\leq ^{D,-}B$ then $A\leq \# B$.
\end{theorem}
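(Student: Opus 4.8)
The plan is to exploit that $A$ has index at most $1$, so that its Drazin inverse coincides with the group inverse and in particular satisfies the two identities $AA^DA=A$ and $A^DA=AA^D$. Writing $A^{D,-}=A^DAA^-$, the first step is to simplify the two one-sided products that govern the relation $\leq^{D,-}$. Using the inner-inverse identity $AA^-A=A$ gives $A^{D,-}A=A^DAA^-A=A^DA$, and using $AA^DA=A$ gives $AA^{D,-}=AA^DAA^-=AA^-$. These reductions turn the abstract hypotheses $A^{D,-}A=A^{D,-}B$ and $AA^{D,-}=BA^{D,-}$ into equations that no longer involve the composite inverse in an essential way.

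Next I would extract each of the two conditions of the right sharp order from one of the hypotheses. Multiplying $AA^{D,-}=BA^{D,-}$ on the right by $A$, the left side becomes $AA^-A=A$ and the right side becomes $BA^DA=BAA^D$, so that $A=BAA^D$; multiplying this identity once more on the right by $A$ and invoking $AA^DA=A$ yields $A^2=BA$, the first requirement of $A\leq\# B$. Multiplying $A^{D,-}A=A^{D,-}B$ on the left by $A$, the left side becomes $AA^-A=A$ and the right side becomes $AA^-B$, so that $A=AA^-B$. Passing to conjugate transposes, the identity $A^*=B^*\bigl((A^-)^*A^*\bigr)$ exhibits every column of $A^*$ as lying in the column space of $B^*$, whence $R(A^*)\subseteq R(B^*)$, the second requirement. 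Combining the two gives $A\leq\# B$.

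I do not expect a serious obstacle, since the index-$1$ hypothesis supplies exactly the identity $AA^DA=A$ that makes the two products collapse. The only points that need care are bookkeeping ones: applying the correct one-sided multiplication so that $AA^{D,-}=BA^{D,-}$ produces $A^2=BA$ while $A^{D,-}A=A^{D,-}B$ produces the range inclusion, and remembering that the range inclusion lives on the conjugate-transpose (row) side, which is why the identity $A=AA^-B$ must be transposed rather than used directly.
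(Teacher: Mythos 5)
Your proposal is correct, but note one contextual point: the paper itself gives no proof of this theorem --- it is recalled as a known result with a citation to \cite{D1} --- so there is no internal argument to compare yours against; your proof stands as a valid self-contained justification. Every step checks out. Since $ind(A)\leq 1$, the Drazin inverse is the group inverse, so $AA^DA=A$ and $A^DA=AA^D$; combined with $AA^-A=A$ these give your reductions $A^{D,-}A=A^DA$ and $AA^{D,-}=AA^-$. Right-multiplying the hypothesis $AA^{D,-}=BA^{D,-}$ by $A$ gives $A=BAA^D$, and a second right-multiplication by $A$, together with $AA^DA=A$, gives $A^2=BA$. Left-multiplying $A^{D,-}A=A^{D,-}B$ by $A$ collapses the left side to $A$ and the right side to $AA^-B$, and conjugate-transposing $A=AA^-B$ gives $A^*=B^*(A^-)^*A^*$, hence $R(A^*)\subseteq R(B^*)$. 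These are precisely the two conditions of Definition \ref{sharp-def}(ii), so $A\leq\# B$. Your closing remarks are also accurate: the index hypothesis is used only through the identities $AA^DA=A$ and $A^DA=AA^D$ for $A$ (the assumption on $B$ is needed merely so that the sharp order is defined), and the asymmetry --- one hypothesis yielding the algebraic condition $A^2=BA$, the other yielding the row-space inclusion --- is exactly the bookkeeping one must get right.
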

\begin{corollary}\cite[Corollary 6.3.9]{core-nil}\label{corminus}
Let $A, B \in\mathbb{C}^{n\times n}$ be matrices of index at most 1. If $A\leq \# B$ then $AA^-=BA^-$ and $A^-A=A^-B$ for some $A^- \in A\{1\}$.
\end{corollary}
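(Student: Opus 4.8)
The plan is to reduce the statement to the index-one block form of $A$ and then write down the required inner inverse explicitly. Since $ind(A)\le 1$, there is a nonsingular $T\in\cnn$ and a nonsingular matrix $C$ with
\[
A=T\left(\begin{array}{cc} C & 0 \\ 0 & 0 \end{array}\right)T^{-1},
\]
so that $R(A)$ and $N(A)$ are the images under $T$ of the two coordinate subspaces, and the group inverse is $A^{\#}=T\left(\begin{array}{cc} C^{-1} & 0 \\ 0 & 0 \end{array}\right)T^{-1}$. Writing $B=T\left(\begin{array}{cc} B_{11} & B_{12} \\ B_{21} & B_{22} \end{array}\right)T^{-1}$ in the same basis, I would first translate the hypothesis $A\leq \# B$. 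Expanding $A^{2}=BA$ block-wise and cancelling the nonsingular $C$ forces $B_{11}=C$ and $B_{21}=0$, so $B$ is block upper triangular with $(1,1)$ block equal to $C$.

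Next I would translate the range hypothesis. Using the equivalence $R(A^{*})\subseteq R(B^{*})\Leftrightarrow N(B)\subseteq N(A)$ and reading off $N(A)$ and $N(B)$ from the block forms, this inclusion becomes $N(B_{22})\subseteq N(B_{12})$, which is exactly the consistency of the matrix equation $XB_{22}=B_{12}$; equivalently $B_{12}B_{22}^{-}B_{22}=B_{12}$ for any fixed inner inverse $B_{22}^{-}\in B_{22}\{1\}$. This is the step on which the whole argument turns, because it is precisely the solvability condition that will allow the off-diagonal block of the inner inverse to be chosen.

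Finally I would propose the candidate
\[
A^{-}=T\left(\begin{array}{cc} C^{-1} & -C^{-1}B_{12}B_{22}^{-} \\ 0 & 0 \end{array}\right)T^{-1}
\]
and verify by direct block multiplication that $AA^{-}A=A$ (immediate from the $(1,1)$ block being $C^{-1}$), that $AA^{-}=BA^{-}$, and that $A^{-}A=A^{-}B$. The identity $AA^{-}=BA^{-}$ uses only $B_{11}=C$ and $B_{21}=0$, whereas $A^{-}A=A^{-}B$ reduces to $C^{-1}(B_{12}-B_{12}B_{22}^{-}B_{22})=0$, which holds exactly because of the consistency condition extracted from $R(A^{*})\subseteq R(B^{*})$. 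I expect the genuine obstacle to be this second identity: the naive choice $A^{-}=A^{\#}$ already satisfies $AA^{\#}=BA^{\#}$ (indeed $(A-B)A=A^{2}-BA=0$ forces $(A-B)A^{\#}=0$), but $A^{\#}$ fails $A^{\#}A=A^{\#}B$ whenever $B_{12}\neq 0$, so the correction term $-C^{-1}B_{12}B_{22}^{-}$ must be inserted and its existence justified through the range condition.
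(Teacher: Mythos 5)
Your proposal is correct, and it is worth noting at the outset that the paper itself contains no proof of this statement: it is imported verbatim as \cite[Corollary 6.3.9]{core-nil}, where it arises as the implication ``sharp order $\Rightarrow$ minus order'' inside that book's general theory of matrix partial orders. So there is no internal argument to compare against; what you have produced is a self-contained replacement for the citation, and it checks out. Your block computations are all valid: $A^2=BA$ with $C$ nonsingular does force $B_{11}=C$, $B_{21}=0$; the passage $R(A^*)\subseteq R(B^*)\Leftrightarrow N(B)\subseteq N(A)$ is basis-free and, read in the similarity basis, gives exactly $N(B_{22})\subseteq N(B_{12})$, i.e.\ $B_{12}B_{22}^-B_{22}=B_{12}$; and with
$A^{-}=T\left(\begin{smallmatrix} C^{-1} & -C^{-1}B_{12}B_{22}^{-} \\ 0 & 0 \end{smallmatrix}\right)T^{-1}$
one verifies $AA^-A=A$, $AA^{-}=BA^{-}=T\left(\begin{smallmatrix} I & -B_{12}B_{22}^{-} \\ 0 & 0 \end{smallmatrix}\right)T^{-1}$, and $A^{-}A=A^{-}B=T\left(\begin{smallmatrix} I & 0 \\ 0 & 0 \end{smallmatrix}\right)T^{-1}$, the last equality consuming precisely the consistency condition. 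Your closing diagnosis is also accurate: $(A-B)A=0$ gives $(A-B)A^{\#}=0$ for free, so the one-sided identity $AA^{\#}=BA^{\#}$ is automatic, and the entire content of the corollary is the existence of the off-diagonal correction $-C^{-1}B_{12}B_{22}^{-}$, whose solvability is exactly what the hypothesis $R(A^*)\subseteq R(B^*)$ encodes. Two small dividends of your route over the cited one: you never use $ind(B)\leq 1$ (only $ind(A)\leq 1$ enters), so your statement is marginally more general than the one quoted, and your inner inverse is explicit rather than existential, which is in the same spirit as the constructive core-nilpotent computations of Theorem \ref{gd1-deco} later in the paper.
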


\section{GD1 inverse}
In this section, we first introduce the new class of generalized inverse, named GD1 inverse. From here onward, unless specified, we assume the matrix $A\in\cnn$ is of index $k$. Further, in the paper, we fix a 1-inverse and a GD-inverse; for those fixed inverses, we define just one GD1 inverse. However, it may change if we  range $A^-$ and $A^{GD}$ in their respective sets of generalized inverses.
\begin{definition}\label{gdd1}
 Let $A^-\in A\{1\}$ and  $A^{GD}\in A\{GD\}$.   The  GD1 inverse of $A$, associated with $A^{-}$ and $A^{GD}$, is denoted by $A^{GD1}$ and defined as $A^{GD1} =A^{GD}AA^{-}$.  \end{definition}
Further, we denote the set of all GD1 inverses by $A\{GD1\}$.
   \begin{example}\rm
Let $A=
\left[\begin{array}{cccc}
     1 & 0  &   0 & 0\\
     0  & 0   & 0 & 1\\
     0  & 0   & 0 & 0\\
      0  & 0   & 0 & 0
\end{array}\right]$ and we fix  $A^-=\left[\begin{array}{crrr}
     1 & 0  & 1 & 1\\
     0 &-1 &-1 &0\\
     0 &1 &1 &-1\\
     0 &1 &-1 &-1
    \end{array}\right]$. Clearly $ind(A)=2$ and we can calculate a GD inverse, $A^{GD}=
    \begin{bmatrix}
 1 & 0 & 0 & 0\\ 0 & 0& 0 & 1\\ 0 & 1& 0 & 0\\
 0 & 1 & 0 & 1\\
    \end{bmatrix}$. Thus, for these fixed $A^{-} \in A\{1\}$ and $A^{GD} \in A\{GD\}$, the GD1 inverse of $A$ is given by  
		$$A^{GD1}=A^{GD}AA^-=
		\left[\begin{array}{ccrr}
     1     &0    & 1     &1 \\
     0  & 0   & 0 & 0\\
     0     &1  &  -1    &-1\\
     0    & 1 &   -1    &-1\\
    \end{array}\right].
		$$ 
		Moreover, comparing to the known inverses, we observe that $A^{GD1}$ is not the same neither
 		$$
		A^{\dag}=
\left[\begin{array}{cccc}
     1 & 0  &   0 & 0\\
     0  & 0   & 0 & 0\\
     0  & 0   & 0 & 0\\
      0  & 1   & 0 & 0
\end{array}\right]$$ 
nor
$$ A^D=A^{D,\dagger}=A^{\dagger,D}=A^{\ep}=A^{\diamond}=A^{\textcircled{w}}=A^{\textcircled{w},\dagger}=
\left[\begin{array}{cccc}
     1 & 0  &   0 & 0\\
     0  & 0   & 0 & 0\\
     0  & 0   & 0 & 0\\
      0  & 0   & 0 & 0
\end{array}\right].
$$
\end{example}
The example above allows us to highlight that our new inverse is considerably different to the known ones in the literature.

Now, we derive some elemental properties of GD1 inverses.
 \begin{theorem}\label{theorem-1}
Let $A \in \mathbb{C}^{n \times n}$ with $A^-\in A\{1\}$ and  $A^{GD}\in A\{GD\}$ fixed. Then
\begin{enumerate}
\item[(i)] $A^{GD1} \in A\{1,2\}$. 
\item[(ii)]  $A^mA^{GD1} =A^mA^{-}$ and $A^{GD1}A^m = A^{GD}A^m$ for any  positive integer $m$.
\item[(iii)] $AA^{GD1}=P_{R(A),N(AA^{-})}$.
\item[(iv)]  $A^{GD1}A=P_{R(A^{GD}A),N(A)}$.
\end{enumerate}
\end{theorem}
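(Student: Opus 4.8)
The plan is to exploit repeatedly the two defining identities $AA^{GD}A=A$ (from $A^{GD}\in A\{GD\}$) and $AA^-A=A$ (from $A^-\in A\{1\}$), which together collapse almost every product involving $A^{GD1}=A^{GD}AA^-$. Essentially every item reduces to inserting the expression for $A^{GD1}$ and cancelling an interior block of the form $AA^{GD}A$ or $AA^-A$.

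For (i), I would verify the inner-inverse condition by computing $AA^{GD1}A=AA^{GD}AA^-A$ and collapsing the left block $AA^{GD}A$ to $A$, then the remaining $AA^-A$ to $A$. For the outer-inverse condition I would expand $A^{GD1}AA^{GD1}=A^{GD}AA^-AA^{GD}AA^-$ and reduce it in two steps: first replace the central $AA^-A$ by $A$, then the resulting $AA^{GD}A$ by $A$, recovering $A^{GD}AA^-=A^{GD1}$. This gives $A^{GD1}\in A\{1,2\}$.

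For (ii), I would first record the auxiliary identities $A^mA^{GD}A=A^m$ and $AA^-A^m=A^m$, each obtained by pulling the factor $A^{m-1}$ out on the appropriate side and applying $AA^{GD}A=A$ (respectively $AA^-A=A$). Substituting the definition of $A^{GD1}$ then yields $A^mA^{GD1}=(A^mA^{GD}A)A^-=A^mA^-$ and $A^{GD1}A^m=A^{GD}(AA^-A^m)=A^{GD}A^m$. For (iii) and (iv), the key reductions are $AA^{GD1}=AA^{GD}AA^-=AA^-$ and $A^{GD1}A=A^{GD}AA^-A=A^{GD}A$. I would then check that each is idempotent, via $(AA^-)^2=(AA^-A)A^-=AA^-$ and $(A^{GD}A)^2=A^{GD}(AA^{GD}A)=A^{GD}A$, so each is the oblique projector onto its range along its kernel.

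It remains to identify those spaces, and this is the one place that needs a little care rather than pure cancellation. For (iii), $R(AA^-)=R(A)$ because $A=AA^-A\in R(AA^-)$ gives one inclusion while $R(AA^-)\subseteq R(A)$ is automatic, and the kernel is $N(AA^-)$ by definition, so $AA^{GD1}=P_{R(A),N(AA^-)}$. For (iv), $N(A^{GD}A)=N(A)$ because $A^{GD}Ax=0$ forces $AA^{GD}Ax=Ax=0$ and the reverse inclusion is clear, while the range is $R(A^{GD}A)$ by definition, giving $A^{GD1}A=P_{R(A^{GD}A),N(A)}$. The main obstacle is thus not the algebra but the range/kernel bookkeeping in (iii)--(iv), where one must argue both inclusions to pin down the projector exactly rather than merely exhibiting an idempotent.
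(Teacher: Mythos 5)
Your proposal is correct and follows essentially the same route as the paper: expand $A^{GD1}=A^{GD}AA^-$, cancel interior blocks $AA^{GD}A=A$ and $AA^-A=A$ to get the identities in (i)--(ii) and the reductions $AA^{GD1}=AA^-$, $A^{GD1}A=A^{GD}A$, and then identify the projectors in (iii)--(iv) by the same range/kernel inclusions (the paper phrases these via $A^{GD1}\in A\{1\}$, you via $A=AA^-A$ and $AA^{GD}A=A$, which is the same argument). Your only deviation is cosmetic: you treat all $m\geq 1$ uniformly in (ii) where the paper separates $m=1$ from $m\geq 2$, and you spell out the idempotency checks that the paper delegates to part (i).
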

\begin{proof}
(i) We have that $A A^{GD1} A = A A^{GD} A A^- A = A A^{GD} A = A$ and $A^{GD1} A A^{GD1} = A^{GD} A A^- A A^{GD} A A^-=
A^{GD} A A^{GD} A A^-= A^{GD} A A^-=A^{GD1}$.

(ii) If $m=1$ then $AA^{GD1}=AA^{GD}AA^-=AA^-$ and $A^{GD1}A=A^{GD}AA^-A=A^{GD}A$. Consider  $m\in\mathbb{N}$ with $m\geq 2$. Then
\begin{center}
    $A^mA^{GD1}=A^mA^{GD}AA^{-}=A^{m-1}AA^{GD}AA^{-} =A^mA^{-}$ and \\
  $A^{GD1}A^m= A^{GD}AA^{-}A^m = A^{GD}A A^{-} AA^{m-1} = A^{GD}A^m$.
\end{center}

(iii) From the Definition of GD1 inverse and (i), we obtain that $AA^{GD1}$ is idempotent. We can easily verify the range and null conditions from the following:
\begin{center}
  $AA^{GD1} = AA^{-}$, and $A^{GD1} \in A\{1\}$.
\end{center}
 (iv)  Clearly,  by (i), $A^{GD1} A $ is a projector. Similarly, from the following expressions
\begin{center}
$A^{GD1}A  = A^{GD}A A^-A = A^{GD}A$ and
		$A^{GD1} \in A\{1\}$,
\end{center}
we obtain  $R(A^{GD1}A)=R(A^{GD}A)$ and $N(A^{GD1}A)=N(A)$.
\end{proof}

Next result characterizes $GD1$ inverses from a geometrical point of view.
\begin{theorem}
Let $A \in \mathbb{C}^{n \times n}$ with  $A^-\in A\{1\}$ and  $A^{GD}\in A\{GD\}$  fixed. The $GD1$ inverse $A^{GD1}$ of $A$ is the unique matrix $X \in \mathbb{C}^{n \times n}$ that satisfies
\begin{equation}\label{eqngd1}
  AX = P_{R(A),N(AA^{-})} \qquad \text{ and } \qquad R(X)\subseteq R(A^{GD}A).
\end{equation}
\end{theorem}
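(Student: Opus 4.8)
The plan is to split the argument into an existence part (showing $X=A^{GD1}$ does satisfy the two stated conditions) and a uniqueness part. For existence, the first condition in (\ref{eqngd1}) is nothing new: it is exactly Theorem~\ref{theorem-1}(iii), which records $AA^{GD1}=P_{R(A),N(AA^-)}$. For the range inclusion I would simply factor $A^{GD1}=A^{GD}AA^-=(A^{GD}A)A^-$, so that every column of $A^{GD1}$ lies in $R(A^{GD}A)$; hence $R(A^{GD1})\subseteq R(A^{GD}A)$, which is the second condition. This already shows the set of solutions is nonempty.

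For uniqueness, suppose $X\in\mathbb{C}^{n\times n}$ satisfies both requirements in (\ref{eqngd1}). Using Theorem~\ref{theorem-1}(iii) again, $AX=P_{R(A),N(AA^-)}=AA^{GD1}$, so $A(X-A^{GD1})=0$ and therefore $R(X-A^{GD1})\subseteq N(A)$. On the other hand, $R(X)\subseteq R(A^{GD}A)$ by hypothesis and $R(A^{GD1})\subseteq R(A^{GD}A)$ by the existence part, so $R(X-A^{GD1})\subseteq R(A^{GD}A)$. Putting these together gives the key containment
\begin{equation*}
R(X-A^{GD1})\subseteq N(A)\cap R(A^{GD}A).
\end{equation*}

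The crux of the argument, and the step I expect to be the main point, is to show that this intersection is trivial. Here I would invoke Theorem~\ref{theorem-1}(iv), which states that $A^{GD1}A=P_{R(A^{GD}A),N(A)}$ is a projector onto $R(A^{GD}A)$ along $N(A)$. The very existence of such a projector forces the direct sum decomposition $\mathbb{C}^{n}=R(A^{GD}A)\oplus N(A)$, and in particular $N(A)\cap R(A^{GD}A)=\{0\}$. Consequently $R(X-A^{GD1})=\{0\}$, that is $X=A^{GD1}$, which completes the uniqueness part and hence the proof. The only subtlety is to be sure that one is entitled to read off the transversality $R(A^{GD}A)\cap N(A)=\{0\}$ directly from the oblique projector in part (iv); everything else reduces to the two already-established identities $AA^{GD1}=P_{R(A),N(AA^-)}$ and $R(A^{GD1})\subseteq R(A^{GD}A)$.
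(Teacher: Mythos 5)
Your proposal is correct and follows essentially the same route as the paper: existence via Theorem~\ref{theorem-1}(iii) and the factorization $A^{GD1}=(A^{GD}A)A^-$, then uniqueness by subtracting two solutions and showing the difference has range in a trivial intersection. The only cosmetic difference is that the paper writes the intersection as $R(A^{GD}A)\cap N(A^{GD}A)=\{0\}$ using idempotency of $A^{GD}A$ (after noting $N(A)=N(A^{GD}A)$), whereas you read the direct sum $\mathbb{C}^n=R(A^{GD}A)\oplus N(A)$ off the projector in Theorem~\ref{theorem-1}(iv) --- which is legitimate, and indeed the same fact, since $A^{GD1}A=A^{GD}AA^-A=A^{GD}A$.
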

\begin{proof}
Let $X=A^{GD1}$. Then by Theorem \ref{theorem-1}, we have $AX=A{A^{GD1}} = P_{R(A),N(AA^{-})}$. Clearly  $R(A^{GD1})=R(A^{GD}AA^-)\subseteq R(A^{GD}A)$. Hence, $A^{GD1}$ satisfies (\ref{eqngd1}).

Next we will show the uniqueness.
Suppose that there exist two solutions, say $X$ and $Y$, which satisfy the equation \eqref{eqngd1}. Then $A(X-Y)=P_{R(A),N(AA^{-})} -P_{R(A),N(AA^{-})}= 0$. Consequently, $R(X-Y) \subseteq N(A)= N(A^{GD}A)$. Further, from $R(X) \subseteq R(A^{GD}A)$ and $R(Y)\subseteq R(A^{GD}A)$,
we have $R(X-Y) \subseteq R(A^{GD}A)$. Thus $R(X-Y)\subseteq R(A^{GD}A) \cap N (A^{GD}A)=\{0\}$ because $A^{GD}A$ is a projector. Hence $X=Y$.
\end{proof}

Next, we characterize $GD1$ inverses by using information about the inner inverse or the GD inverse in two separate conditions.

\begin{theorem}\label{theorem-2}
Let $A \in \mathbb{C}^{n \times n}$ with  $A^-\in A\{1\}$ and  $A^{GD}\in A\{GD\}$  fixed. Then the following statements are equivalent:
\begin{enumerate}
     \item [(i)] $X = A^{GD1}$.
     \item [(ii)] $AX = AA^{-}$ and $R(X)=R(A^{GD}A)$.
     \item [(iii)] $A^{-}AX = A^{-}AA^{-}$ and $R(X)=R(A^{GD}A)$.
\end{enumerate}
\end{theorem}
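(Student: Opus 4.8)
The plan is to prove the three statements are equivalent through the cyclic chain $(i)\Rightarrow(ii)\Rightarrow(iii)\Rightarrow(i)$, so that each individual implication reduces to one short algebraic manipulation together with an appeal to results already established. For $(i)\Rightarrow(ii)$ I would substitute $X=A^{GD}AA^-$ and use the inner-inverse identity $AA^{GD}A=A$ to get $AX=AA^{GD}AA^-=AA^-$ at once. The range equality $R(X)=R(A^{GD}A)$ is the only part needing a moment's thought: the inclusion $R(A^{GD}AA^-)\subseteq R(A^{GD}A)$ is clear, while the reverse follows by writing $A^{GD}A=A^{GD}(AA^-A)=(A^{GD}AA^-)A$, which yields $R(A^{GD}A)\subseteq R(A^{GD}AA^-)=R(X)$.

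The implication $(ii)\Rightarrow(iii)$ is essentially immediate: left-multiplying $AX=AA^-$ by $A^-$ produces $A^-AX=A^-AA^-$, and the range condition $R(X)=R(A^{GD}A)$ is simply carried over unchanged. No further work is required here.

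The crux is $(iii)\Rightarrow(i)$. Starting from $A^-AX=A^-AA^-$, I would left-multiply by $A$ and absorb the extra factor via $AA^-A=A$: the left-hand side collapses to $AA^-AX=AX$ and the right-hand side to $AA^-AA^-=AA^-$, so that $AX=AA^-$. By Theorem~\ref{theorem-1}(iii) this is precisely the statement $AX=P_{R(A),N(AA^-)}$. Combined with the range hypothesis $R(X)=R(A^{GD}A)$, which in particular gives $R(X)\subseteq R(A^{GD}A)$, the hypotheses of the uniqueness characterization established just above are met, and therefore $X=A^{GD1}$. I expect the one genuine subtlety to be recovering the full identity $AX=AA^-$ from the compressed identity $A^-AX=A^-AA^-$ in $(iii)$; the resolution is exactly the cancellation afforded by $AA^-A=A$, after which the already-proved uniqueness theorem closes the argument with no additional computation.
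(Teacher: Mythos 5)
Your proof is correct, and most of it runs parallel to the paper's: the implication (i) $\Rightarrow$ (ii), including both range inclusions via $A^{GD}A=(A^{GD}AA^-)A=XA$, and the reduction of (iii) to $AX=AA^-$ by pre-multiplying by $A$ and cancelling with $AA^-A=A$, are exactly the paper's steps. Where you genuinely diverge is in closing the loop. The paper proves (ii) $\Rightarrow$ (i) by a direct computation: from $R(X)=R(A^{GD}A)$ it writes $X=A^{GD}AY$, inserts $A=AA^{GD}A$ to get $X=A^{GD}AX$, and then substitutes $AX=AA^-$ to conclude $X=A^{GD}AA^-=A^{GD1}$. You instead prove (iii) $\Rightarrow$ (i) by appealing to the geometric uniqueness characterization established just before the statement (that $A^{GD1}$ is the unique $X$ with $AX=P_{R(A),N(AA^-)}$ and $R(X)\subseteq R(A^{GD}A)$). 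This is legitimate: that theorem precedes this one, its proof does not depend on it, and it only requires the inclusion $R(X)\subseteq R(A^{GD}A)$, which your hypothesis supplies. The paper's route buys self-containedness (a two-line algebraic verification, no projector machinery); yours buys economy, since nothing beyond the cancellation $AA^-A=A$ needs to be computed once the uniqueness theorem is in hand. One small point you should make explicit: passing from $AX=AA^-$ to $AX=P_{R(A),N(AA^-)}$ uses not only Theorem \ref{theorem-1}(iii) but also the identity $AA^{GD1}=AA^{GD}AA^-=AA^-$ (Theorem \ref{theorem-1}(ii) with $m=1$), because part (iii) of that theorem is stated for $AA^{GD1}$, not directly for $AA^-$.
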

\begin{proof}
(i) $\Rightarrow$ (ii) Let $X=A^{GD1}$. Then $AX=AA^{GD1} =AA^{GD}AA^{-} =AA^{-}$, $R(X)\subseteq R(A^{GD}A)$, and $R(A^{GD}A)=R(A^{GD}AA^-A)=R(XA)\subseteq R(X)$. \\
(ii) $\Rightarrow$ (i)  Let $R(X)=R(A^{GD}A)$. Then   $X=A^{GD}AY$ for some $Y\in\cnn$. Now
\begin{center}
$X=A^{GD}AY=A^{GD}AA^{GD}AY=A^{GD}AX=A^{GD}AA^{-}=A^{GD1}$. \end{center}
(ii) $\Rightarrow$ (iii) It is trivial.\\
(iii) $\Rightarrow$ (ii) Let $A^{-}AX = A^{-}AA^{-}$. Pre-multiplying by $A$, we obtain
$AX = AA^{-}$.
This completes the proof.
\end{proof}

In a similar manner, we can show the below theorem.
\begin{theorem}\label{theorem-3}
Let $A \in \mathbb{C}^{n \times n}$ with  $A^-\in A\{1\}$ and  $A^{GD}\in A\{GD\}$  fixed. Then the following statements are equivalent:
\begin{enumerate}
     \item [(i)] $X=A^{GD1}$.
     \item [(ii)] $XA=A^{GD}A$ and $N(X)=N(AA^{-})$.
     \item [(iii)] $XAA^{GD}=A^{GD}AA^{GD}$ and $N(X)=N(AA^{-})$.
\end{enumerate}
\end{theorem}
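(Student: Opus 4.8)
The plan is to mirror the proof of Theorem \ref{theorem-2}, systematically dualizing the range conditions into null-space conditions and swapping pre-multiplications for post-multiplications. I would establish the cycle (i) $\Rightarrow$ (ii) $\Rightarrow$ (i) together with the easy equivalence (ii) $\Leftrightarrow$ (iii).

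For (i) $\Rightarrow$ (ii), setting $X = A^{GD1} = A^{GD}AA^-$, the identity $XA = A^{GD}AA^-A = A^{GD}A$ follows at once from $AA^-A = A$. The null-space equality $N(X) = N(AA^-)$ I would prove by double inclusion: if $A^{GD1}x = 0$, then premultiplying by $A$ and using $AA^{GD}A = A$ gives $AA^-x = 0$; conversely $AA^-x = 0$ forces $A^{GD1}x = A^{GD}(AA^-x) = 0$. (Equivalently, one reads this off from the identity $AA^{GD1} = AA^-$ recorded in Theorem \ref{theorem-1}(iii).)

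The crux is (ii) $\Rightarrow$ (i), which is the dual of the factorization argument in Theorem \ref{theorem-2}. There, $R(X) = R(A^{GD}A)$ let one write $X = A^{GD}AY$; here the hypothesis $N(X) = N(AA^-)$ --- in particular $N(AA^-) \subseteq N(X)$ --- lets me write $X = Z AA^-$ for some $Z \in \cnn$, invoking the standard fact that $N(B) \subseteq N(X)$ is equivalent to the existence of a factorization $X = ZB$ (seen by passing to conjugate transposes, so that the inclusion becomes $R(X^*) \subseteq R(B^*)$). Post-multiplying $X = ZAA^-$ by $A$ and using $AA^-A = A$ yields $XA = ZA$, so the hypothesis $XA = A^{GD}A$ gives $ZA = A^{GD}A$; substituting back, $X = ZAA^- = (ZA)A^- = A^{GD}AA^- = A^{GD1}$.

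Finally, (ii) $\Leftrightarrow$ (iii) is routine: (ii) $\Rightarrow$ (iii) by post-multiplying $XA = A^{GD}A$ by $A^{GD}$, and (iii) $\Rightarrow$ (ii) by post-multiplying $XAA^{GD} = A^{GD}AA^{GD}$ by $A$ and collapsing with $AA^{GD}A = A$ on both sides; the null-space condition is simply carried along unchanged. The only genuinely nontrivial point is the factorization $X = ZAA^-$ used in (ii) $\Rightarrow$ (i); every other step is a direct manipulation of the defining relations $AA^-A = A$ and $AA^{GD}A = A$, so I expect that dualization lemma to be the single place that needs explicit justification.
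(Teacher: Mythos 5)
Your proposal is correct, and it is exactly the argument the paper intends: the paper omits the proof of Theorem \ref{theorem-3}, stating only that it follows ``in a similar manner'' to Theorem \ref{theorem-2}, and your dualization (replacing the range factorization $X=A^{GD}AY$ by the null-space factorization $X=ZAA^{-}$, then substituting $XA=A^{GD}A$) is precisely that dual argument, with all steps valid.
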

Next, we discuss an alternative characterization of the GD1 inverse. Up until now, we have focused on the inner inverse condition of GD1 inverses, and now we stress the outer inverse condition.

\begin{theorem}\label{thm4}
Let $A \in \mathbb{C}^{n \times n}$ with  $A^-\in A\{1\}$ and  $A^{GD}\in A\{GD\}$  fixed. Then the  following statements are equivalent:
\begin{enumerate}
     \item [(i)] $X=A^{GD1}$.
     \item [(ii)] $XAX=X,~R(X)=R(A^{GD}A)$, and $ N(X)=N(AA^{-})$.
     \item [(iii)] $XAX=X,~XA=A^{GD}A$, and $ AX=AA^{-}$.
     \end{enumerate}
\end{theorem}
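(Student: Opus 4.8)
The plan is to establish the three equivalences by running the cycle (i) $\Rightarrow$ (iii) $\Rightarrow$ (ii) $\Rightarrow$ (i), drawing on the elementary identities already recorded in Theorem \ref{theorem-1} together with the characterization of Theorem \ref{theorem-2}. The implication (i) $\Rightarrow$ (iii) is essentially a restatement of what has been proved: if $X=A^{GD1}$, then $X\in A\{1,2\}$ from Theorem \ref{theorem-1}(i) gives $XAX=X$, while the case $m=1$ of Theorem \ref{theorem-1}(ii) yields $AX=AA^{-}$ and $XA=A^{GD}A$. Thus no real work is needed here.

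For (iii) $\Rightarrow$ (ii) I would convert the multiplicative identities into range and null-space statements, with the outer-inverse relation $X=XAX$ as the crucial lever. Writing $X=(XA)X=A^{GD}A\,X$ shows $R(X)\subseteq R(A^{GD}A)$, while $A^{GD}A=XA$ gives the reverse inclusion $R(A^{GD}A)=R(XA)\subseteq R(X)$, so $R(X)=R(A^{GD}A)$. For the kernels, the same identity forces $N(AX)=N(X)$ (if $AXu=0$ then $Xu=XAXu=0$, and the opposite inclusion is trivial); combined with $AX=AA^{-}$ this gives $N(X)=N(AA^{-})$.

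The substantive step is (ii) $\Rightarrow$ (i), and here the main obstacle is to recover the single identity $AX=AA^{-}$ from the purely geometric data in (ii), after which Theorem \ref{theorem-2}, in the form (ii) $\Rightarrow$ (i), finishes the proof. My approach is to analyze the idempotent $E:=AX$, which satisfies $E^{2}=AXAX=AX=E$ by $XAX=X$, and then to pin it down as a projector by identifying its range and kernel. Since $R(X)=R(A^{GD}A)$ and $AA^{GD}A=A$ (because $A^{GD}$ is in particular an inner inverse), the range is $R(E)=A\,R(X)=R(AA^{GD}A)=R(A)$; and as in the previous paragraph $N(E)=N(AX)=N(X)=N(AA^{-})$. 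Because $AA^{-}$ is itself the idempotent with range $R(A)$ and kernel $N(AA^{-})$, the two projectors coincide, giving $AX=AA^{-}$. With $R(X)=R(A^{GD}A)$ already in hand, Theorem \ref{theorem-2} then yields $X=A^{GD1}$. An alternative, slightly more abstract route would be to invoke the uniqueness of an outer inverse with prescribed range and null space: both $X$ and $A^{GD1}$ are outer inverses of $A$ sharing the range $R(A^{GD}A)$ and the kernel $N(AA^{-})$, hence they must be equal. I expect the projector computation above to be the cleaner of the two to write out.
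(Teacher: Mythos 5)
Your proposal is correct, but it runs the cycle in the opposite direction from the paper and handles the key step differently, so a comparison is worthwhile. The paper proves (i) $\Rightarrow$ (ii) $\Rightarrow$ (iii) $\Rightarrow$ (i); its substantive step is (ii) $\Rightarrow$ (iii), where $AX=AA^{-}$ is recovered by a purely algebraic kernel manipulation: from $X(I-AX)=0$ one gets $R(I-AX)\subseteq N(X)=N(AA^{-})$, hence $AA^{-}(I-AX)=0$ and so $AX=AA^{-}AX=AA^{-}$; then $XA=A^{GD}A$ follows by writing $X=A^{GD}AY$ and computing $XA=A^{GD}AXA=A^{GD}AA^{-}A=A^{GD}A$, after which (iii) $\Rightarrow$ (i) is the one-line computation $X=XAX=A^{GD}AX=A^{GD}AA^{-}=A^{GD1}$. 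You instead prove (i) $\Rightarrow$ (iii) $\Rightarrow$ (ii) $\Rightarrow$ (i), and your substantive step (ii) $\Rightarrow$ (i) identifies $E=AX$ as an idempotent with $R(E)=AR(A^{GD}A)=R(A)$ and $N(E)=N(X)=N(AA^{-})$, then invokes uniqueness of the projector with prescribed range and null space to conclude $AX=AA^{-}$, finishing via the implication (ii) $\Rightarrow$ (i) of Theorem \ref{theorem-2}. Both arguments pivot on recovering the same identity $AX=AA^{-}$; yours does it geometrically (projector uniqueness), the paper's does it algebraically, and your use of Theorem \ref{theorem-2} absorbs the factorization trick $X=A^{GD}AY$ that the paper repeats explicitly. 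The net effect is that your proof is slightly shorter but leans more heavily on the earlier theorems, while the paper's version of the hard implication is self-contained. All of your individual steps check out: in particular the verification $N(AX)=N(X)$ from $X=XAX$, the equality $R(AA^{-})=R(A)$ needed to match the two projectors, and the citation of Theorem \ref{theorem-1} with $m=1$ for (i) $\Rightarrow$ (iii) are all sound.
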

\begin{proof}
(i) $\Rightarrow$ (ii) Let $X=A^{GD1}$. We have proved that $X \in A\{2\}$. From the Theorems \ref{theorem-2} and \ref{theorem-3}, it is clear that $R(X)=R(A^{GD}A)$ and $ N(X)=N(AA^{-})$. \\
(ii) $\Rightarrow$ (iii) From  $X=XAX$, we obtain  $X(I-AX) =0$. Thus, $R(I-AX) \subseteq N(X)=N(AA^{-})$ and hence  $AA^{-}(I-AX) =0$. Consequently, 
\begin{equation}\label{eq-q}
 AA^{-}=AA^{-}AX= AX.  
\end{equation} 
Let $R(X)=R(A^{GD}A)$. Then $X=A^{GD}AY$ for some $Y\in\cnn$. Now by using equation \eqref{eq-q}, we have 
\begin{center}  $XA=A^{GD}AYA=A^{GD}AA^{GD}AYA=A^{GD}AXA=A^{GD}AA^{-}A=A^{GD}A$.
\end{center}
(iii) $\Rightarrow$ (i) Clearly   $X=XAX=A^{GD}AX=A^{GD}AA^-=A^{GD1}$.
 \end{proof}

From a purely algebraic approach, we can provide the following result. The proof is similar  to that of Theorem
\ref{thm4} following the chain of implications (i) $\Rightarrow$ (ii)  $\Rightarrow$ (iii)  $\Rightarrow$ (iv) $\Rightarrow$ (i).

\begin{theorem}\label{thm5}
$A \in \mathbb{C}^{n \times n}$ with  $A^-\in A\{1\}$ and  $A^{GD}\in A\{GD\}$  fixed.  Then the following statements are equivalent:
\begin{enumerate}
     \item [(i)] $X = A^{GD1}$.
     \item [(ii)] $XAX =X,~ XA= A^{GD}A,~ AX = AA^{-}$, and $AXA = A$.
     \item [(iii)] $XAX =X,~ XA= A^{GD}A$, and $ AX = AA^{-}$.
     \item [(iv)] $A^{GD}AX =X,~~XA= A^{GD}A,~ AX = AA^{-}$, and $XAA^{-} = X$.
         \end{enumerate}
     \end{theorem}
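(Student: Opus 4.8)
The plan is to establish the cyclic chain of implications $(i) \Rightarrow (ii) \Rightarrow (iii) \Rightarrow (iv) \Rightarrow (i)$, exactly as the remark preceding the statement suggests. Unlike the geometric characterizations in Theorems \ref{theorem-2}--\ref{thm4}, all four conditions here are purely algebraic, so every implication should reduce to a short manipulation built on the three absorption identities $A A^- A = A$, $A A^{GD} A = A$, and the two defining relations $X A = A^{GD}A$, $A X = A A^-$ that recur throughout the section.

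For $(i) \Rightarrow (ii)$ I would set $X = A^{GD1} = A^{GD}AA^-$ and collect facts already proved: $X \in A\{1,2\}$ by Theorem \ref{theorem-1}(i) gives both $XAX = X$ and $AXA = A$, while $AX = AA^-$ and $XA = A^{GD}A$ follow either from Theorems \ref{theorem-2} and \ref{theorem-3} or directly, since $AX = A A^{GD}A A^- = (A A^{GD}A)A^- = A A^-$ and $XA = A^{GD}A A^- A = A^{GD}(A A^- A) = A^{GD}A$. The step $(ii) \Rightarrow (iii)$ is immediate, as $(iii)$ merely discards the redundant condition $AXA = A$ from $(ii)$.

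The one implication that needs a genuine, though brief, argument is $(iii) \Rightarrow (iv)$. Assuming $XAX = X$, $XA = A^{GD}A$, and $AX = AA^-$, I would obtain the two new identities of $(iv)$ by re-bracketing the outer-inverse equation: $A^{GD}AX = (XA)X = XAX = X$ using $XA = A^{GD}A$, and $XAA^- = X(AX) = XAX = X$ using $AX = AA^-$; the remaining two conditions of $(iv)$ carry over unchanged. Finally, $(iv) \Rightarrow (i)$ closes the loop in a single line: from $A^{GD}AX = X$ and $AX = AA^-$ we get $X = A^{GD}AX = A^{GD}(AX) = A^{GD}AA^- = A^{GD1}$.

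I do not anticipate a real obstacle. The only place to stay alert is $(iii) \Rightarrow (iv)$, where one must recognize that the relations $XA = A^{GD}A$ and $AX = AA^-$ are precisely what allow $XAX = X$ to be split into the two absorption equations $A^{GD}AX = X$ and $XAA^- = X$. Care should be taken to apply each substitution on the correct side, but no range/null-space or idempotency reasoning of the kind used in the earlier theorems is required here.
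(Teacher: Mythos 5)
Your proof is correct and follows exactly the cyclic chain of implications (i) $\Rightarrow$ (ii) $\Rightarrow$ (iii) $\Rightarrow$ (iv) $\Rightarrow$ (i) that the paper itself prescribes for this result (the paper omits the details, saying only that the argument is similar to that of Theorem \ref{thm4}). Your algebraic substitutions---in particular using $XA=A^{GD}A$ and $AX=AA^{-}$ to split $XAX=X$ into the two absorption identities $A^{GD}AX=X$ and $XAA^{-}=X$, and then closing the loop via $X=A^{GD}(AX)=A^{GD}AA^{-}=A^{GD1}$---supply precisely the omitted details and are all valid.
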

	
Next results present GD1 inverses as the solution of a matrix rank equation.
\begin{theorem}
Let $A\in\cnn$ with $ind(A)=k$. 
\begin{enumerate}
    \item[(i)]  There exist idempotent matrices $M,N\in\cnn$ such that $$
A^kA^{-}M=0,~MA^k=0,~NA^kA^{-}=0,~ A^kN=0.
$$
\item[(ii)] There exists a matrix $X$ such that
$$
\textup{rank}\begin{bmatrix}
 A & I-M\\
 I-N & X
\end{bmatrix}=\textup{rank}(A).
$$
\end{enumerate}
Moreover,  $M=I-AA^{GD1}$, $N=I-A^{GD1}A$, and $X=A^{GD1}$.
\end{theorem}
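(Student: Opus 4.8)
The plan is to give a constructive existence proof: I would take exactly the matrices named in the ``Moreover'' clause, namely $M=I-AA^{GD1}$, $N=I-A^{GD1}A$ and $X=A^{GD1}$, and verify that they enjoy all the asserted properties. The idempotency of $M$ and $N$ is immediate: by Theorem~\ref{theorem-1}(iii)--(iv) both $AA^{GD1}=P_{R(A),N(AA^{-})}$ and $A^{GD1}A=P_{R(A^{GD}A),N(A)}$ are projectors, and the complement $I-P$ of any idempotent $P$ is again idempotent.

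For part (i) the first move is to replace $M$ and $N$ by simpler equals. By Theorem~\ref{theorem-1}(ii) (with $m=1$) we have $AA^{GD1}=AA^{-}$ and $A^{GD1}A=A^{GD}A$, so $M=I-AA^{-}$ and $N=I-A^{GD}A$. The four identities then follow from three elementary facts: the inner-inverse relation $AA^{-}A=A$, the G-Drazin inner-inverse relation $AA^{GD}A=A$, and the two absorption laws $A^{GD}A^{k+1}=A^{k}=A^{k+1}A^{GD}$ from \eqref{GD_inverse}. Concretely, $A^{k}A^{-}M=A^{k}A^{-}-A^{k}A^{-}AA^{-}$ collapses after noticing $A^{k}A^{-}A=A^{k}$; $MA^{k}=A^{k}-AA^{-}A^{k}=0$; $NA^{k}A^{-}=A^{k}A^{-}-A^{GD}A^{k+1}A^{-}=0$ using $A^{GD}A^{k+1}=A^{k}$; and $A^{k}N=A^{k}-A^{k}A^{GD}A=0$ using $A^{k}A^{GD}A=A^{k-1}(AA^{GD}A)=A^{k}$. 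Each line is a one-step reduction, so I expect no difficulty here.

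For part (ii), since $I-M=AA^{GD1}$ and $I-N=A^{GD1}A$, the block matrix to analyse is
$$
\begin{bmatrix} A & AA^{GD1}\\ A^{GD1}A & A^{GD1}\end{bmatrix}.
$$
I would compute its rank by rank-preserving block operations. Right-multiplying by the invertible matrix $\begin{bmatrix} I & -A^{GD1}\\ 0 & I\end{bmatrix}$ clears the second block column, using $A^{GD1}AA^{GD1}=A^{GD1}$ (Theorem~\ref{theorem-1}(i)) for the lower-right entry, and produces $\begin{bmatrix} A & 0\\ A^{GD1}A & 0\end{bmatrix}$. Left-multiplying the result by $\begin{bmatrix} I & 0\\ -A^{GD1} & I\end{bmatrix}$ then clears the lower-left block, giving $\begin{bmatrix} A & 0\\ 0 & 0\end{bmatrix}$, whose rank is $\mathrm{rank}(A)$. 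Since both transforming matrices are invertible, the rank is unchanged throughout, which is the claim.

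The computations are all routine, so the only real issue is bookkeeping: picking, at each step, the correct identity from Theorem~\ref{theorem-1} and the G-Drazin axioms, and making sure the two block multipliers are genuinely invertible (which they are, being unit triangular). An equivalent route for part (ii) is to invoke the generalized Schur-complement rank formula: the inclusions $R(AA^{GD1})\subseteq R(A)$ and $R((A^{GD1}A)^{*})\subseteq R(A^{*})$ hold, so the rank equals $\mathrm{rank}(A)+\mathrm{rank}\big(A^{GD1}-A^{GD1}AA^{-}AA^{GD1}\big)$, and the Schur complement vanishes because $A^{GD1}AA^{-}AA^{GD1}=A^{GD1}AA^{GD1}=A^{GD1}$; I would nonetheless prefer the explicit block reduction since it is self-contained.
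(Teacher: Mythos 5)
Your proposal is correct and takes essentially the same approach as the paper: it chooses $M=I-AA^{GD1}$, $N=I-A^{GD1}A$, $X=A^{GD1}$, verifies the identities in (i) from the inner-inverse and G-Drazin relations, and establishes (ii) by multiplying the block matrix by the same two unit-triangular block matrices built from $A^{GD1}$ (the paper applies them in the opposite order, which is immaterial). The only detail the paper adds is a one-sentence treatment of the trivial case $k=0$, where $M=N=0$, which your one-step reductions involving $A^{k-1}$ implicitly exclude.
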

\begin{proof}
 Let $M:=I-AA^{GD1}$  and  $N:=I-A^{GD1}A$. Since matrices $AA^{GD1}$ and $A^{GD1}A$ are idempotent, so both $M$ and $N$ are idempotent.
 
On the other hand, let $ind(A)=k\geq 1$. Then
\begin{eqnarray*}
A^kA^{-}M &=&A^kA^{-}(I-AA^{GD1})=A^kA^{-}-A^kA^{-}AA^{GD1}\\
&=&A^kA^{-}-A^{k-1}AA^{-}AA^{GD}AA^{-}=A^kA^{-}-A^{k-1}AA^{-}=0,
\end{eqnarray*}
and
\begin{equation*}
MA^k=(I-AA^{GD1})A^k=A^k-AA^{GD1}A^k=A^k-AA^{GD}AA^{-}A^k=A^k-A^k=0.
\end{equation*}
Further, we verify that
\begin{eqnarray*}
NA^kA^{-}=(I-A^{GD1}A)A^kA^{-}=A^kA^{-}-A^{GD}A^{k+1}A^-=A^kA^{-}-A^kA^{-}=0,
\end{eqnarray*}
and
\begin{equation*}
  A^kN=A^k(I-A^{GD1}A)=A^k-A^kA^{GD1}A=A^k-A^kA^{GD}AA^{-}A=A^k-A^k=0.
\end{equation*}
The case $k=0$ is trivial because $M=N=0$. 

Let $X=A^{GD1}$. Multiplying by block elemental matrices we get
\begin{eqnarray*}
    \textup{rank}\begin{bmatrix}
 A & I-M\\
 I-N & X
\end{bmatrix}&=&
\textup{rank} \begin{bmatrix}
 I & 0\\
 -A^{GD1} & I
\end{bmatrix} \begin{bmatrix}
 A & AA^{GD1}\\
 A^{GD1}A & A^{GD1}
\end{bmatrix}
\begin{bmatrix}
 I & -A^{GD1}\\
 0 & I
\end{bmatrix} \\
&=&\textup{rank}(A).
\end{eqnarray*}
\end{proof}

Next, we study the GD1 inverse as a $(B,C)$ inverse of $A$.
\begin{theorem}
Let $A\in\cnn$ with $A^-\in A\{1\}$ and  $A^{GD}\in A\{GD\}$ fixed. Then $A^{GD1}$ is the $(A^{GD}A,AA^{-})$ inverse of $A$.
\end{theorem}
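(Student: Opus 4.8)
The plan is to verify directly the four defining relations of a $(B,C)$-inverse given in Definition~\ref{BC-invofA}, taking $B=A^{GD}A$, $C=AA^{-}$, and $X=A^{GD1}$. Thus I must establish (a) $XAB=B$, (b) $CAX=C$, (c) $N(C)\subseteq N(X)$, and (d) $R(X)\subseteq R(B)$. The whole argument rests on three ingredients already in hand: the idempotency of the projectors $A^{GD}A$ and $AA^{-}$, the identities $A^{GD1}A=A^{GD}A$ and $AA^{GD1}=AA^{-}$ from Theorem~\ref{theorem-1}(ii), and the range/null-space descriptions of $A^{GD1}$ proved in Theorems~\ref{theorem-2} and \ref{theorem-3}.

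For relation (a), I would substitute $XA=A^{GD}A$ and then use that $A^{GD}A$ is idempotent, since $A^{GD}A\,A^{GD}A=A^{GD}(AA^{GD}A)=A^{GD}A$ by the inner-inverse equation $AA^{GD}A=A$; this collapses $XAB=(A^{GD}A)^2$ to $A^{GD}A=B$. For relation (b), I would use $AX=AA^{-}$ to rewrite $CAX=AA^{-}(AX)=(AA^{-})^2$ and then invoke idempotency of $AA^{-}$ (which follows from $AA^{-}A=A$) to obtain $AA^{-}=C$.

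The two inclusion conditions are essentially free: Theorem~\ref{theorem-2} gives $R(A^{GD1})=R(A^{GD}A)=R(B)$, so (d) holds as an equality, and Theorem~\ref{theorem-3} gives $N(A^{GD1})=N(AA^{-})=N(C)$, so (c) also holds as an equality. Finally, since a $(B,C)$-inverse is unique whenever it exists (as noted right after Definition~\ref{BC-invofA}), satisfying the four relations identifies $A^{GD1}$ unambiguously as the $(A^{GD}A,AA^{-})$-inverse of $A$.

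I do not expect a genuine obstacle here: the statement is essentially a repackaging of the projector identities already established. The only point requiring care is bookkeeping, namely matching each of the four conditions to the correct previously proved identity, and in particular recognizing that the two inclusions come for free, indeed as equalities, from the range and null-space characterizations rather than needing any separate computation.
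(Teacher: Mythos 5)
Your proposal is correct and follows essentially the same route as the paper: both verify the four defining conditions of Definition~\ref{BC-invofA} with $B=A^{GD}A$, $C=AA^{-}$, $X=A^{GD1}$. The only cosmetic difference is that the paper checks the two inclusions by the direct factorizations $X=BA^{-}$ and $X=A^{GD}C$, whereas you obtain them (as equalities) by citing the range and null-space characterizations of Theorems~\ref{theorem-2} and~\ref{theorem-3}; the algebraic content is the same.
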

\begin{proof}
Let $X=A^{GD1}$, $B=A^{GD}A$, and $C=AA^{-}$. Then
\begin{equation*}
    XAB=A^{GD1}AA^{GD}A=A^{GD}AA^{-}AA^{GD}A=A^{GD}AA^{GD}A=A^{GD}A=B
\end{equation*}
and
\begin{equation*}
CAX=AA^{-}AA^{GD1}=AA^{GD}AA^{-}=
AA^{-}=C.
\end{equation*}

 From $X=A^{GD}AA^-=BA^-$ and $X=A^{GD}AA^{-}=A^{GD}C$, we obtain $R(X)\subseteq R(B)$ and $N(C)\subseteq N(X)$. Now, by Definition \ref{BC-invofA}, we have that $A^{GD1}$ is the $(B,C)$ inverse of $A$. Hence, the proof is completed.
\end{proof}

\subsection{A binary relation based on GD1 inverses}
In view of the matrix relation defined for weighted Moore-Penrose inverse \cite{hernandez2013partial}, MP1 inverse \cite{mp1}, D1 inverse \cite{D1}, core inverse \cite{ferreyra2020some}, and G-Drazin inverse \cite{coll2018weighted,wang2016partial}, we introduce a binary relation for GD1 inverses.
\begin{definition}
Let $A,B \in\mathbb{C}^{n\times n}$. We will say that  $A$ is below $B$ under the relation $\leq ^{GD1}$ if $AA^{GD1}=BA^{GD1}$  and $A^{GD1}A=A^{GD1}B$ for some   $A^{GD1} \in  A\{GD1\}$.
\end{definition}

\begin{proposition}\label{gd1-d1}
Let $ A,B \in\mathbb{C}^{n\times n}$. If $A\leq ^{GD1}B$ then $A\leq ^{D,-}B$.
\end{proposition}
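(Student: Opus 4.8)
The plan is to work directly with the inner inverse fixing $A^{GD1}$. By hypothesis $A\leq^{GD1}B$ there exist an inner inverse $A^-$ and a G-Drazin inverse $A^{GD}$ with $A^{GD1}=A^{GD}AA^-$ satisfying $AA^{GD1}=BA^{GD1}$ and $A^{GD1}A=A^{GD1}B$. I would then set $A^{D,-}=A^DAA^-$ using the \emph{same} inner inverse $A^-$, so that $A\leq^{D,-}B$ reduces to proving the two equalities $AA^{D,-}=BA^{D,-}$ and $A^{D,-}A=A^{D,-}B$.

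The engine of the argument is a pair of identities tying the Drazin and G-Drazin inverses together: $A^DAA^{GD}=A^D$ and $A^{GD}AA^D=A^D$. I would prove both from the standard Drazin relations $A^D=(A^D)^{k+1}A^k=A^k(A^D)^{k+1}$ combined with the G-Drazin relations $A^{k+1}A^{GD}=A^k=A^{GD}A^{k+1}$; for instance $A^DAA^{GD}=(A^D)^{k+1}A^{k+1}A^{GD}=(A^D)^{k+1}A^k=A^D$, and symmetrically for the other.

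With these in hand, the key step is to represent $A^{D,-}$ as a multiple of $A^{GD1}$ on each side, matching the side on which the hypothesis acts. On the left, $A^{D,-}=A^DAA^{GD1}$, since $A^DAA^{GD1}=A^DAA^{GD}AA^-=A^DAA^-=A^{D,-}$ by the first identity. On the right, using $A^{GD1}A=A^{GD}A$ (Theorem~\ref{theorem-1}(ii)) together with the second identity, $A^{GD1}AA^{D,-}=A^{GD}AA^DAA^-=A^DAA^-=A^{D,-}$. These two one-sided representations are exactly what allow the hypotheses to be inserted.

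Finally I would substitute. For the left-hand relation, $A^{D,-}A=(A^DA)A^{GD1}A=(A^DA)A^{GD1}B=A^{D,-}B$, using $A^{GD1}A=A^{GD1}B$. For the right-hand relation, $AA^{D,-}=A(A^{GD1}AA^{D,-})=(AA^{GD1})(AA^{D,-})=(BA^{GD1})(AA^{D,-})=B(A^{GD1}AA^{D,-})=BA^{D,-}$, using $AA^{GD1}=BA^{GD1}$. Both equalities give $A\leq^{D,-}B$. The main obstacle is conceptual rather than computational: one must recognize that the right/left structure of the two defining equations of $\leq^{D,-}$ forces $A^{D,-}$ to be written as a \emph{left} multiple of $A^{GD1}$ for one equation and a \emph{right} multiple for the other, and that the bridging identities $A^DAA^{GD}=A^{GD}AA^D=A^D$ are precisely what make these one-sided representations possible.
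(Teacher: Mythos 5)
Your proof is correct and takes essentially the same route as the paper: both fix the same inner inverse $A^-$, set $A^{D,-}=A^DAA^-$, and verify the two defining equalities of $\leq^{D,-}$ by inserting the hypotheses $AA^{GD1}=BA^{GD1}$ and $A^{GD1}A=A^{GD1}B$, relying on the power identities $A^D=(A^D)^{k+1}A^k=A^k(A^D)^{k+1}$ combined with $A^{GD}A^{k+1}=A^k=A^{k+1}A^{GD}$. Your packaging of these computations into the bridging identities $A^DAA^{GD}=A^{GD}AA^D=A^D$ is merely a slightly cleaner organization of the same argument the paper carries out inline.
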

\begin{proof}
Assume that  $A\leq ^{GD1}B$. Then, $AA^{GD1}=BA^{GD1}$ and $A^{GD1}A=A^{GD1}B$ for some $A^{GD1} \in A\{GD1\}$. Hence, there exist $A^{GD} \in A\{GD\}$ and $A^- \in A\{1\}$ such that $AA^{GD}AA^-=BA^{GD}AA^-$ and $A^{GD}A=A^{GD}AA^-B$. Now, if we consider $A^{D,-}=A^DAA^-$, we get
\begin{eqnarray*}
A^{D,-}B&=&A^DAA^-B=A^DAA^{GD}AA^-B=A^DAA^{GD1}B=A^{D}AA^{GD1}A\\
&=&A^DAA^-A=A^{D,-}A,
\end{eqnarray*}
and
\begin{eqnarray*}
AA^{D,-}&=& AA^{GD}AA^-AA^{D,-}=AA^{GD1}AA^{D,-}=BA^{GD1}AA^{D,-}=BA^{GD}AA^DAA^-\\
&=&BA^{GD}A^{k+1}(A^D)^{k+1}AA^-=BA^k(A^D)^{k+1}AA^-=BA^D AA^-=BA^{D,-}.
\end{eqnarray*}
This completes the proof.
\end{proof}
From Proposition \ref{gd1-d1} and Theorem \ref{right-par}, we have the following result.
\begin{corollary}\label{corpo}
Let $A, B \in\mathbb{C}^{n\times n}$ be matrices of index at most 1. If $A\leq ^{GD1}B$ then $A\leq \# B$, where $\leq \#$ is the right sharp partial order.
\end{corollary}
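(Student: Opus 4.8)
The plan is to chain together the two results that immediately precede the statement; the corollary asserts nothing beyond the composition of Proposition \ref{gd1-d1} with Theorem \ref{right-par}, so the argument will be a short transitivity through the intermediate relation $\leq^{D,-}$. I would begin by taking the hypothesis $A\leq^{GD1}B$ and feeding it directly into Proposition \ref{gd1-d1}, whose conclusion is $A\leq^{D,-}B$. This is the substantive step, and it is already carried out for us: the proof of Proposition \ref{gd1-d1} exhibits the explicit witness $A^{D,-}=A^DAA^-$ built from the same fixed inner inverse $A^-$ and G-Drazin inverse $A^{GD}$ that witness $A\leq^{GD1}B$, and verifies both $A^{D,-}A=A^{D,-}B$ and $AA^{D,-}=BA^{D,-}$, which is exactly the definition of $A\leq^{D,-}B$.

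Having obtained $A\leq^{D,-}B$, I would then apply Theorem \ref{right-par}. Here the index hypothesis is what matters: Theorem \ref{right-par} requires both $A$ and $B$ to have index at most $1$, and this is precisely the standing assumption of the corollary. Under that restriction the theorem yields $A\leq \# B$, completing the argument. No new computation is needed beyond pointing to the two cited results in sequence.

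The only point requiring care, and the nearest thing to an obstacle, is confirming that the hypotheses align across the two invocations. The relation $\leq^{GD1}$ (like $\leq^{D,-}$) is defined for arbitrary square matrices, whereas the right sharp partial order $\leq \#$ and Theorem \ref{right-par} live in the index-at-most-one setting. Since the corollary explicitly restricts to matrices of index at most $1$, the passage from $\leq^{GD1}$ to $\leq^{D,-}$ via Proposition \ref{gd1-d1} and then to $\leq \#$ via Theorem \ref{right-par} applies verbatim, and the proof reduces to this two-line chain.
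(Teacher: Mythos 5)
Your proposal is correct and coincides exactly with the paper's own argument: the paper derives the corollary precisely by chaining Proposition~\ref{gd1-d1} (to pass from $\leq^{GD1}$ to $\leq^{D,-}$) with Theorem~\ref{right-par} (to pass from $\leq^{D,-}$ to $\leq\#$ under the index-at-most-one hypothesis). Your added remark about checking that the index restriction is only needed for the second step is a sensible bit of care, but no substantive difference from the paper.
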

It is clear that $\leq ^{GD1}$  is a reflexive relation. Since $\leq \#$ is a partial order, from Corollary \ref{corpo} we can conclude that $\leq ^{GD1}$ is an anti-symmetric relation. Next result completes the necessary information to obtain a partial order.

\begin{theorem}
  Let $A, B \in\mathbb{C}^{n\times n}$ be matrices of index at most 1. Then $A\leq {\#}B$ if and only $A\leq ^{GD1} B$.
  \end{theorem}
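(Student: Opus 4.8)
The statement is an equivalence, and one implication is already recorded: $A \leq^{GD1} B \Rightarrow A \leq\# B$ is precisely Corollary~\ref{corpo}. So the plan is to concentrate entirely on the converse $A \leq\# B \Rightarrow A \leq^{GD1} B$ and then assemble the proof as ``($\Leftarrow$) is Corollary~\ref{corpo}; ($\Rightarrow$) is the argument below''. The structural fact I would exploit throughout is that, since $A$ has index at most $1$, its Drazin inverse $A^D$ coincides with the group inverse and itself belongs to $A\{GD\}$: for $k=1$ the reduced G-Drazin conditions are $AXA=A$ and $AX=XA$, both satisfied by $A^D$ (one also checks directly $A^2A^D=A$). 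This supplies a concrete, well-behaved element of $A\{GD\}$ to build a witness with.

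The construction I would use is as follows. Because $\leq^{GD1}$ only demands that the two identities hold for \emph{some} $A^{GD1}\in A\{GD1\}$, I am free to choose both the inner inverse and the G-Drazin inverse conveniently. First I invoke Corollary~\ref{corminus}: from $A \leq\# B$ it produces $A^-\in A\{1\}$ with $AA^-=BA^-$ and $A^-A=A^-B$. Then I set $A^{GD}:=A^D$ and form the candidate $A^{GD1}:=A^DAA^-$. For the first required identity, Theorem~\ref{theorem-1}(ii) gives $AA^{GD1}=AA^-=BA^-$, so it remains to identify $BA^{GD1}=BA^DAA^-$ with $AA^-$. For the second identity, Theorem~\ref{theorem-1}(ii) gives $A^{GD1}A=A^DA$, while $A^{GD1}B=A^DAA^-B=A^DAA^-A=A^DA$ using $A^-B=A^-A$; hence $A^{GD1}A=A^{GD1}B$ immediately.

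The only genuinely non-formal step, and the place where the right sharp hypothesis enters, is the reduction $BA^DA=A$: combining $A^2=BA$ with commutativity $A^DA=AA^D$ and $A^2A^D=A$ one gets $BA^DA=BAA^D=(BA)A^D=A^2A^D=A$, whence $BA^{GD1}=BA^DAA^-=AA^-=AA^{GD1}$. Everything else is bookkeeping with inner and G-Drazin identities. One subtlety I would flag explicitly is the existential matching: the single inner inverse $A^-$ furnished by Corollary~\ref{corminus} must serve in both identities simultaneously, which is exactly what the definition of $\leq^{GD1}$ allows (one fixed pair $A^-,A^{GD}$), so no compatibility obstruction arises and the witness $A^{GD1}=A^DAA^-$ establishes $A\leq^{GD1}B$.
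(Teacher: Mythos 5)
Your proof is correct and takes essentially the same route as the paper: both directions are handled identically, with the converse cited from Corollary~\ref{corpo} and the forward implication obtained by invoking Corollary~\ref{corminus} to produce $A^-$ with $AA^-=BA^-$ and $A^-A=A^-B$, building the witness $A^{GD1}=A^{GD}AA^-$, and verifying the two defining identities from $A^2=BA$ together with the commutation of the chosen G-Drazin inverse with $A$. The only cosmetic difference is that you fix the specific witness $A^{GD}=A^D$ (the group inverse, for which commutation is automatic), whereas the paper takes an arbitrary $A^{GD}\in A\{GD\}$ and first proves $AA^{GD}=A^{GD}A$ when the index is at most $1$.
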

  \begin{proof}
  First notice that if $k=1$, from $A^2A^{GD}=A=A^{GD}A^2$ and recalling that $A^D A^2=A$, we obtain $AA^{GD}=A^{GD}A$.

	Now, let $A\leq {\#}B$. Then, by \cite[Definition 6.3.1]{core-nil}, we have $A^2=BA$. Moreover, by Corollary \ref{corminus}, there exists $A^- \in A\{1\}$ such that $AA^-=BA^-$ and $A^- A = A^- B$. For this $A^{-} \in A\{1\}$ and some GD inverse $A^{GD}$ of $A$, we define $A^{GD1}=A^{GD} A A^-$. Then,
  \begin{equation*}
  AA^{GD1}=AA^-=A^2A^{GD}A^{-}=BAA^{GD}A^{-}=BA^{GD}AA^-=BA^{GD1}
    \end{equation*}
and
    \begin{equation*}
    A^{GD1}A=A^{GD}AA^-A=A^{GD}AA^-B=A^{GD1}B.
        \end{equation*}
    Hence, $A\leq ^{GD1} B$. By Corollary \ref{corpo}, the proof is complete.
  \end{proof}
  \begin{remark}
      The relation $\leq ^{GD1}$ is a partial order on the set $\mathbb{C}^{GM}_n = \{A \in\mathbb{C}^{n\times n}:ind(A)\leq 1\}$.
  \end{remark}

Some characterizations of the relation $\leq^{GD1}$ can be presented by means of idempotents.
\begin{theorem}
Let $A,B\in\cnn$.  Then the following statements are equivalent:
\begin{enumerate}
     \item [(i)] $A\leq^{GD1}B$.
     \item [(ii)] $A =AA^-B=BA^{GD}A$ for some $A^- \in A\{1\}$ and some $A^{GD} \in A\{GD\}$.
     \item [(iii)] $A = AA^{GD1}B=BA^{GD1}A$ for some $A^{GD1} \in A\{GD1\}$.
     \item [(iv)] There exist idempotents $P,Q\in\cnn$ such that $R(P)=R(A)$, $N(P)=N(AA^-)$, $R(Q)=R(A^{GD}A)$, $N(Q)=N(A)$ and $A=PB=BQ$ for some $A^- \in A\{1\}$ and some $A^{GD} \in A\{GD\}$.
     \item [(v)] There exist idempotents $P,Q\in\cnn$ such that $R(P)=R(A)$, $N(P)=N(A^{GD1})$, $R(Q)=R(A^{GD1})$, $N(Q)=N(A)$ and $A=PB=BQ$ for some $A^{GD1} \in A\{GD1\}$.
\end{enumerate}
\end{theorem}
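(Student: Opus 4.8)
The plan is to route every equivalence through statement (iii), which is the most symmetric, and to rely throughout on the facts already recorded in Theorem \ref{theorem-1}: that $A^{GD1}\in A\{1,2\}$ (so $AA^{GD1}A=A$ and $A^{GD1}AA^{GD1}=A^{GD1}$), that $AA^{GD1}=AA^{-}$ and $A^{GD1}A=A^{GD}A$, and that $AA^{GD1}=P_{R(A),N(AA^{-})}$ and $A^{GD1}A=P_{R(A^{GD}A),N(A)}$. I would also extract from Theorems \ref{theorem-2} and \ref{theorem-3} the range and null-space identities $R(A^{GD1})=R(A^{GD}A)$ and $N(A^{GD1})=N(AA^{-})$. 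The argument then splits into the two chains (i)$\Leftrightarrow$(iii)$\Leftrightarrow$(ii) and (iii)$\Leftrightarrow$(iv)$\Leftrightarrow$(v).

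For (i)$\Leftrightarrow$(iii) I would use only the inner and outer inverse equations. Starting from the defining equalities $AA^{GD1}=BA^{GD1}$ and $A^{GD1}A=A^{GD1}B$ of the relation $\leq^{GD1}$, right-multiplying the first by $A$ and left-multiplying the second by $A$ yield $A=BA^{GD1}A$ and $A=AA^{GD1}B$, which is exactly (iii); conversely, right-multiplying $A=BA^{GD1}A$ by $A^{GD1}$ and left-multiplying $A=AA^{GD1}B$ by $A^{GD1}$ recovers the two defining equalities, using $A^{GD1}AA^{GD1}=A^{GD1}$. The passage (ii)$\Leftrightarrow$(iii) is then immediate upon substituting $AA^{GD1}=AA^{-}$ and $A^{GD1}A=A^{GD}A$, once one observes that choosing a GD1 inverse is the same as choosing the pair $(A^{-},A^{GD})$, so the existential quantifiers in the two statements match.

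For (iii)$\Rightarrow$(iv) I would simply set $P:=AA^{GD1}$ and $Q:=A^{GD1}A$; by Theorem \ref{theorem-1}(iii)--(iv) these are idempotents with exactly the prescribed ranges and null spaces, and (iii) reads $A=PB=BQ$. The converse (iv)$\Rightarrow$(iii) is where the only genuinely non-computational step appears: one must recognize that an idempotent is uniquely determined by its range and its null space, so any $P$ with $R(P)=R(A)$ and $N(P)=N(AA^{-})$ must coincide with the projector $P_{R(A),N(AA^{-})}=AA^{GD1}$, and likewise $Q=A^{GD1}A$; then $A=PB$ and $A=BQ$ become (iii). Finally, (iv)$\Leftrightarrow$(v) is purely a matter of rewriting the constraints via $N(A^{GD1})=N(AA^{-})$ and $R(A^{GD1})=R(A^{GD}A)$, which shows that the idempotent conditions in (iv) and (v) are literally identical.

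The main obstacle is not any single computation but the bookkeeping of the existential quantifiers, namely ensuring that the one $A^{GD1}$ appearing in (i), (iii), (v) and the pair $(A^{-},A^{GD})$ appearing in (ii), (iv) refer consistently to the same fixed choice throughout each implication; together with the one conceptual point of invoking uniqueness of an idempotent from its range and kernel in the (iv)$\Rightarrow$(iii) direction. Everything else reduces to short manipulations with the idempotency and inner/outer inverse relations already available.
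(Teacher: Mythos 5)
Your proposal is correct and follows essentially the same route as the paper's proof: both rely on the identities $AA^{GD1}=AA^{-}$ and $A^{GD1}A=A^{GD}A$, the idempotency facts from Theorem \ref{theorem-1}, the definition $A^{GD1}=A^{GD}AA^{-}$ to handle the existential quantifiers, and the uniqueness of a projector with prescribed range and null space for the step from (iv) back to the other conditions. The only difference is organizational — you use (iii) as the hub, while the paper runs the cycles (i)$\Rightarrow$(ii)$\Rightarrow$(iii)$\Rightarrow$(i) and (i)$\Rightarrow$(iv)$\Rightarrow$(i) — which slightly shortens some chains but changes nothing of substance.
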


\begin{proof}
(i)$ \Rightarrow$ (ii) Let $A\leq ^{GD1}B$. Then, for some $A^{GD1} \in A\{GD1\}$ we have $AA^{GD1}=BA^{GD1}$ and $A^{GD1}A=A^{GD1}B$. Then,
\[
A=AA^{GD}A=AA^{GD}AA^-A=AA^{GD1}A=AA^{GD1}B=AA^{GD}AA^-B=AA^-B
\]
and
\[
A=AA^{GD}A=AA^{GD}AA^-A=AA^{GD1}A=BA^{GD1}A=BA^{GD}A.
\]
(ii) $\Rightarrow$ (iii) Let $A =AA^-B=BA^{GD}A$ for some $A^- \in A\{1\}$ and some $A^{GD} \in A\{GD\}$, and define $A^{GD1}:=A^{GD}AA^-$ using these generalized inverses. Then,
$A=AA^-B=AA^{GD}AA^-B=AA^{GD1}B$ and
$A=BA^{GD}A=BA^{GD}AA^-A=BA^{GD1}A$.\\
(iii) $\Rightarrow$ (i) Let $A = AA^{GD1}B=BA^{GD1}A$ for some $A^{GD1} \in A\{GD1\}$. Then,
\[
A^{GD1}A = A^{GD1}AA^{GD1}B=A^{GD1}B
\]
and
\[
AA^{GD1} = BA^{GD1}AA^{GD1} = BA^{GD1}.
\] \\
(i) $\Rightarrow$ (iv)
Assume that $AA^{GD1}=BA^{GD1}$ and $A^{GD1}A=A^{GD1}B$ for some $A^{GD1} \in A\{GD1\}$.
Let $P=AA^{GD1}$ and $Q=A^{GD1}A$.  From Theorem \ref{theorem-1}, it is clear that both $P$ and $Q$ are idempotent. From $A=AA^{GD}A=AA^{GD}AA^-A=AA^{GD1}A=PA$, we obtain $R(P)=R(A)$. Similarly, from $AA^-=AA^{GD}AA^-=AA^{GD}AA^{GD1}=AA^{GD}P$, we get $N(P)=N(AA^-)$. Now,
\[
A= AA^{GD}AA^-A=AA^{GD1}A=AA^{GD1}B=PB
\]
and 
\[
A= AA^{GD}AA^-A=AA^{GD1}A=BA^{GD1}A=BQ
\]
The range and null conditions  $R(Q)=R(A^{GD}A)$ and $N(Q)=N(A)$ follow from the below identities:
\[Q=A^{GD1}A=A^{GD}A \mbox{ and }A=BQ.
\]
(iv) $\Rightarrow$ (i) Assume that (iv) holds. For the existing $A^-$ and $A^{GD}$, we define $A^{GD1}:=A^{GD}AA^-$. Now,
by using that there exists only one projector with the same range and null spaces, by the hypothesis, it is easy to see that
$P=AA^{GD1}$ and $Q=A^{GD1}A$. Thus,
\begin{eqnarray*}
BA^{GD1}&=&BA^{GD}AA^-=BA^{GD}AA^-AA^-=BA^{GD1}AA^-=BQA^-=AA^-\\
&=&AA^{GD}AA^-=AA^{GD1},
\end{eqnarray*}
and
\begin{eqnarray*}
    A^{GD1}B&=&A^{GD}AA^-B=A^{GD}AA^{GD}AA^-B=A^{GD}AA^{GD1}B=A^{GD}PB=A^{GD}A\\
    &=&A^{GD}AA^-A=A^{GD1}A.
\end{eqnarray*}
(iv) $\Leftrightarrow$ (v) It is evident.
\end{proof}

\subsection{Computation of GD1 inverses by the core-nilpotent decomposition}
In this subsection, we present an explicit expression for GD1 inverses via the core-nilpotent decomposition. For $A\in\cnn$ with $ind(A) = k$ and rank($A^k$)$=s$, the core-nilpotent decomposition \cite{campbell2009generalized,core-nil} of $A$ is given by
\begin{equation}\label{cn}
   A=P\begin{pmatrix}
 C & 0\\0 & N
\end{pmatrix} P^{-1},
    \end{equation}
where  $C\in\mathbb{C}^{s\times s}$, $P\in\cnn$ both are   nonsingular, and $N\in\mathbb{C}^{(n-s)\times (n-s)}$ is nilpotent.  In view of \cite{wang2016partial}, a  G-Drazin inverse of $A$ can be written as
\begin{equation}\label{gd}
   A^{GD}=P\begin{pmatrix}
 C^{-1} & 0\\0 & N^-
\end{pmatrix} P^{-1}, \mbox{ where $N^{-}\in N\{1\}$.}
    \end{equation}
Using the above decompositions for $A$ and $A^{GD}$, we can find an explicit expression for a GD1 inverse as given below.

\begin{theorem}\label{gd1-deco}
Let  $A\in\cnn$ be written as in \eqref{cn}. Let $A^{GD}\in A\{GD\}$ written as in \eqref{gd} and let $A^{-}\in A\{1\}$. Then, the GD1 inverse of $A$ (given by these $A^-$ and $A^{G
D}$) can be expressed as
 \begin{equation*}
   A^{GD1}=P\begin{pmatrix}
 C^{-1} &  VN_r \\0 &
N^- N N^- + L N_{\ell}^- - N_{\ell}^- N_{\ell} L N_r
\end{pmatrix} P^{-1},
    \end{equation*}
  for arbitrary $V$ and $L$, where $N_{\ell}=I-N^-N$ and $N_r=I-NN^-$.
\end{theorem}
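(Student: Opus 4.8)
The plan is to carry out the whole computation in the coordinates supplied by the nonsingular matrix $P$ of the core-nilpotent decomposition, so that the identity $A^{GD1}=A^{GD}AA^-$ becomes a $2\times 2$ block calculation controlled only by the nilpotent part. The first step is to record, from \eqref{cn} and \eqref{gd}, that
\[
A^{GD}A=P\begin{pmatrix} I & 0\\ 0 & N^-N \end{pmatrix}P^{-1},
\]
since $C^{-1}C=I$. Thus only the idempotent $N^-N$ survives from $A^{GD}A$, and all of the work will be concentrated on the nilpotent block.

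Next I would describe a general inner inverse in these coordinates. Writing
\[
A^-=P\begin{pmatrix} X_{11} & X_{12}\\ X_{21} & X_{22} \end{pmatrix}P^{-1}
\]
and imposing $AA^-A=A$, invertibility of $C$ forces the blocks to decouple as $X_{11}=C^{-1}$, $X_{12}N=0$, $NX_{21}=0$, and $NX_{22}N=N$, i.e. $X_{22}\in N\{1\}$. The two homogeneous conditions are handled by the projector identities $X_{12}N=0\iff X_{12}=X_{12}N_r$ and $NX_{21}=0\iff X_{21}=N_\ell X_{21}$, obtained by multiplying by $N^-$ on the appropriate side; in particular $X_{12}$ is exactly of the form $VN_r$ for an arbitrary $V$.

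With this in hand, the plan is to multiply $A^{GD1}=(A^{GD}A)A^-$ block by block. The $(1,1)$ entry is $C^{-1}$; the $(2,1)$ entry vanishes because $N^-NX_{21}=N^-(NX_{21})=0$; and the $(1,2)$ entry is simply $X_{12}=VN_r$, which already yields the upper-right corner of the asserted formula. Everything therefore reduces to the $(2,2)$ entry $N^-NX_{22}$. Substituting the standard parametrization $X_{22}=N^-+N_\ell K+K'N_r$ of $N\{1\}$ and using the cancellation $N^-N\,N_\ell=N^-N-N^-NN^-N=0$, the ``left'' freedom disappears and the block collapses to $N^-NN^-+N^-NK'N_r$. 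Hence, as $X_{22}$ ranges over $N\{1\}$, the $(2,2)$ entry sweeps out exactly the affine corner $N^-NN^-+(I-N_\ell)\,\mathbb{C}^{(n-s)\times(n-s)}\,N_r$.

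The hard part will be the last step: matching this corner with the stated closed form $N^-NN^-+LN_\ell^- - N_\ell^- N_\ell L N_r$. I would attempt this by double inclusion, using $N^-N=I-N_\ell$ together with the defining relation $N_\ell N_\ell^- N_\ell=N_\ell$ of an inner inverse of the idempotent $N_\ell$. Concretely, I would need to verify that each matrix $LN_\ell^- - N_\ell^- N_\ell L N_r$ satisfies the two corner conditions $N_\ell(\,\cdot\,)=0$ and $(\,\cdot\,)N_r=(\,\cdot\,)$, so that it lands in the corner, and conversely that every corner element $N^-NK'N_r$ is produced by a suitable choice of $L$. Controlling this interplay between the two projectors $N_\ell,N_r$ and the inner inverse $N_\ell^-$---and in particular ensuring that the parametrization by $L$ recovers the corner exactly, neither under- nor over-counting---is where I expect essentially all of the difficulty to concentrate, the remainder of the argument being routine block algebra.
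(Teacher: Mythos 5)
Your block strategy is sound and genuinely different from the paper's. The paper writes $A^{GD1}$ itself as an unknown block matrix $P\begin{pmatrix} X & Y\\ T & Z\end{pmatrix}P^{-1}$, extracts $X=C^{-1}$, $T=0$, $YN=0$, $ZN=N^-N$ from the identity $A^{GD1}A=A^{GD}A$, then imposes the outer-inverse identity $A^{GD1}AA^{GD1}=A^{GD1}$, and solves the resulting matrix equations by two applications of the general-solution theorem of Ben-Israel and Greville; you instead parametrize the inner inverse $A^-$ in the $P$-coordinates and multiply out $A^{GD}AA^-$ directly. Everything you do up to and including the identification of the $(2,2)$-corner as the affine set $N^-NN^-+(I-N_\ell)K'N_r$, $K'$ arbitrary, is correct (your parametrization $X_{22}=N^-+N_\ell K+K'N_r$ of $N\{1\}$ is exactly surjective), and this part is arguably more transparent than the paper's route.

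However, the proof is incomplete precisely at the step you defer, and that step, carried out literally, would fail: the displayed formula of the theorem contains a misprint, and the term $LN_\ell^-$ should read $LN_r$. This is what the paper's own proof produces: it gets $Z=N^-NN^-+WN_r$ with $W=L-N_\ell^-N_\ell LN_rN_r^-$, and substituting (using $N_rN_r^-N_r=N_r$) gives $Z=N^-NN^-+LN_r-N_\ell^-N_\ell LN_r$. Your first inclusion is false for the printed form: every achievable $(2,2)$-block $B=N^-NX_{22}$ satisfies $N_\ell B=0$, but $N_\ell\bigl(LN_\ell^--N_\ell^-N_\ell LN_r\bigr)=N_\ell LN_\ell^--N_\ell LN_r$ need not vanish. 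Concretely, take $N=\begin{pmatrix}0&1\\0&0\end{pmatrix}$, $N^-=N^*$, so that $N_\ell=\mathrm{diag}(1,0)$ and $N_r=\mathrm{diag}(0,1)$; choosing $N_\ell^-=N_\ell$ and $L=I$, the printed expression equals $N^-+\mathrm{diag}(1,0)$, which has nonzero first row, whereas every matrix $N^-NX_{22}$ with $X_{22}\in N\{1\}$ has zero first row because $N^-N=\mathrm{diag}(0,1)$. So the matching you flagged as "the hard part" cannot be completed against the statement as printed.

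Once the middle term is corrected to $LN_r$, your double inclusion closes in two lines, and your argument becomes a complete (and clean) proof. Writing $T=(I-N_\ell^-N_\ell)LN_r$, you get $N_\ell T=(N_\ell-N_\ell N_\ell^-N_\ell)LN_r=0$ and $TN_r=T$, so $T$ lies in the corner; conversely, given a corner element $(I-N_\ell)K'N_r$, choose $L=(I-N_\ell)K'$ and use idempotency of $N_\ell$, namely $N_\ell^-N_\ell(I-N_\ell)=N_\ell^-(N_\ell-N_\ell^{2})=0$, to obtain $(I-N_\ell^-N_\ell)LN_r=(I-N_\ell)K'N_r$. In short: right approach, genuinely different from the paper's, but with a gap exactly at the crux, and filling it requires recognizing that the theorem's formula must be read with $LN_r$ in place of $LN_\ell^-$ (as the paper's own derivation confirms).
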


\begin{proof}
If we fix matrices $A^-$ and $A^{GD}$ as in the hypothesis, we can write
$$A^{GD1}=A^{GD}A A^- = P\begin{pmatrix}
 X & Y\\T & Z
\end{pmatrix} P^{-1}.
$$
 By comparing $A^{GD1}A=A^{GD}A$, we obtain
\begin{center}
    $X=C^{-1},~~YN=0,~~T=0,~~ZN=N^-N$.
\end{center}
Further, by using Theorem 1 (page 52) from
\cite{greville1974generalized}, the general solution of $ZN=N^{-}N$ is $Z=N^- N N^- + W (I-N N^-)$, for arbitrary $W$ and the general solution of $YN=0$ is given by $ Y=V(I-NN^-)$ for arbitrary $V$. Now, by Theorem \ref{theorem-1} (i), that is, by comparing $A^{GD1}AA^{GD1}=A^{GD1}$, we get $(I-N^- N) W (I-NN^-)=0$. By applying again \cite[Theorem 1 (page 52)]{greville1974generalized}, we obtain
$W=L-(I-N^-N)^- (I-N^-N) L (I-NN^-)(I-NN^-)^-$ for arbitrary $L$.  Finally, by substituting the expression of $W$ in that of $Z$ we get the result.
This completes the proof.
\end{proof}

Note that the matrix $Z=N^- N N^- + L(I-N^-N)^- - (I-N^-N)^-(I-N^-N) L (I-NN^-)$ in Theorem \ref{gd1-deco} is an inner inverse of $N$.  Thus a GD1 inverse of $A$ will be G-Drazin inverse of $A$ if and only if $Y=V(I-NN^-)=0$, which is stated in the following corollary.

Some particular cases of GD1 inverses are presented below.
\begin{corollary}
Let $A$, $A^{GD}$ and $A^{GD1}$ respectively as defined in Theorem \ref{gd1-deco}. Then
\begin{enumerate}
    \item[(i)] $A^{GD1}=A^{D,-}$ if and only if $N^- N N^- = W (N N^--I)$.
    \item[(ii)] $A^{GD1}=A^{GD}$ if and only if $V(I-NN^-)=0$.
    \item[(iii)] ${\cal A}\{GD\}\cap{\cal A}\{GD1\} =P\begin{pmatrix}
 C^{-1} & 0\\0 & N^- N N^- + W (I-N N^-)
\end{pmatrix} P^{-1}$,  where $W$ satisfies $(I-N^- N) W (I-NN^-)=0$.
\end{enumerate}
\end{corollary}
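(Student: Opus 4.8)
The plan is to prove all three parts by working in the core--nilpotent basis \eqref{cn}--\eqref{gd} and comparing blocks, starting from the explicit form of $A^{GD1}$ in Theorem \ref{gd1-deco}. First I would record the block shape of a generic inner inverse: writing $A^-=P\left(\begin{smallmatrix} B_1 & B_2\\ B_3 & B_4\end{smallmatrix}\right)P^{-1}$, the identity $AA^-A=A$ forces $B_1=C^{-1}$, $B_2N=0$, $NB_3=0$ and $NB_4N=N$, so that $B_4\in N\{1\}$. Since $A^{GD}A=P\left(\begin{smallmatrix}I & 0\\ 0 & N^-N\end{smallmatrix}\right)P^{-1}$ by \eqref{gd}, multiplying by $A^-$ and using $N^-NB_3=N^-(NB_3)=0$ reproduces the block form of Theorem \ref{gd1-deco}, with $(1,2)$-block $VN_{r}=B_2$ and $(2,2)$-block $Z=N^-NB_4=N^-NN^-+W(I-NN^-)$ for some $W$ satisfying $(I-N^-N)W(I-NN^-)=0$. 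This normal form is the common engine for all three items.

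For (i) I would compute $A^{D,-}=A^DAA^-$ in the same basis. As $A^D=P\left(\begin{smallmatrix}C^{-1} & 0\\ 0 & 0\end{smallmatrix}\right)P^{-1}$ gives $A^DA=P\left(\begin{smallmatrix}I & 0\\ 0 & 0\end{smallmatrix}\right)P^{-1}$, one obtains $A^{D,-}=P\left(\begin{smallmatrix}C^{-1} & B_2\\ 0 & 0\end{smallmatrix}\right)P^{-1}$. The key observation is that $A^{GD1}$ and $A^{D,-}$ are built from the \emph{same} $A^-$, so their $(1,1)$, $(1,2)$ and $(2,1)$ blocks already coincide; hence $A^{GD1}=A^{D,-}$ if and only if the $(2,2)$-blocks agree, i.e. $Z=0$. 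Substituting $Z=N^-NN^-+W(I-NN^-)$ turns $Z=0$ into $N^-NN^-=-W(I-NN^-)=W(NN^--I)$, which is precisely the stated condition.

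For (ii) I would invoke the two-equation description of $A\{GD\}$ recalled after \eqref{GD_inverse}, namely $X\in A\{GD\}$ iff $AXA=A$ and $A^kX=XA^k$. The first equation holds because $A^{GD1}\in A\{1\}$ by Theorem \ref{theorem-1}(i), and by Theorem \ref{theorem-1}(ii) the second reduces to $A^kA^-=A^{GD}A^k$. Using $N^k=0$ one computes $A^kA^-=P\left(\begin{smallmatrix}C^{k-1} & C^kB_2\\ 0 & 0\end{smallmatrix}\right)P^{-1}$ and $A^{GD}A^k=P\left(\begin{smallmatrix}C^{k-1} & 0\\ 0 & 0\end{smallmatrix}\right)P^{-1}$; since $C^k$ is invertible these are equal iff $B_2=VN_{r}=0$. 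This yields the asserted equivalence, understood (as in the discussion just before the corollary) as the condition for $A^{GD1}$ to be a G-Drazin inverse of $A$.

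Finally, (iii) follows by splicing the two preceding computations. Any element of $A\{GD1\}$ has the Theorem \ref{gd1-deco} shape, and by the computation in (ii) it additionally belongs to $A\{GD\}$ exactly when its $(1,2)$-block $VN_{r}$ vanishes; setting it to zero leaves $P\left(\begin{smallmatrix}C^{-1} & 0\\ 0 & N^-NN^-+W(I-NN^-)\end{smallmatrix}\right)P^{-1}$, the constraint $(I-N^-N)W(I-NN^-)=0$ being inherited from the reflexivity in Theorem \ref{theorem-1}(i). I would then prove the set equality by the two inclusions. The step I expect to demand the most care is matching the $(2,2)$-block descriptions as \emph{sets}: verifying that every $W$ with $(I-N^-N)W(I-NN^-)=0$ is realized by an admissible $B_4\in N\{1\}$, equivalently reconciling the free parameter $L$ of Theorem \ref{gd1-deco} with the constrained parameter $W$. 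This is the point where the nilpotent block $N$ and its inner inverse interact most delicately, and it is where I would concentrate the bulk of the verification.
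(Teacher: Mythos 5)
Your proposal is correct and follows essentially the paper's own route: the paper gives this corollary no separate proof, presenting it as a block-by-block reading of Theorem \ref{gd1-deco} in the core--nilpotent basis (together with the remark that the $(2,2)$ block $Z$ is an inner inverse of $N$), and that is precisely what you carry out, including your interpretation of item (ii) as ``$A^{GD1}\in A\{GD\}$'', which is indeed what the paper's preceding remark asserts. The single step you deferred in item (iii) closes in one line: given $W$ with $(I-N^-N)W(I-NN^-)=0$, set $B_4:=N^-+W(I-NN^-)$; then $NB_4N=NN^-N+NW(I-NN^-)N=N$, so $B_4\in N\{1\}$ and the block-diagonal matrix with blocks $C^{-1}$ and $B_4$ is an admissible $A^-\in A\{1\}$, while the constraint, rewritten as $W(I-NN^-)=N^-NW(I-NN^-)$, gives $N^-NB_4=N^-NN^-+W(I-NN^-)$, so this $A^-$ realizes the prescribed $(2,2)$ block. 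Since moreover $N\bigl(N^-NN^-+W(I-NN^-)\bigr)N=N$, the resulting matrix also lies in $A\{GD\}$, and both inclusions follow.
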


In view of the Theorem \ref{gd1-deco}, we can verify the following result.
\begin{corollary}
For a fixed $A^{-}\in A\{1\}$ and $A^{GD}\in A\{GD\}$, consider the decomposition's for $A$ and $A^{GD1}$ respectively as given in Theorem \ref{gd1-deco}. Then the following are equivalent:
\begin{enumerate}
    \item[(i)] $A\leq ^{GD1}B$.
    \item [(ii)] $B=P\begin{pmatrix}
 C & -CV(I-NN^-)B_4\\0& B_4
\end{pmatrix} P^{-1}$, where $B_4$ satisfies $(N^- N N^- + W (I-N N^-))B_4=N^{-}N$, $B_4(N^- N N^- + W (I-N N^-))=NN^- +N W (I-N N^-)$, and $W$ satisfies $(I-N^- N) W (I-NN^-)=0$.
\end{enumerate}
\end{corollary}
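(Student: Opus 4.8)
The plan is to push both defining equalities of the relation $\leq^{GD1}$ through the core--nilpotent decomposition and read off the forced shape of $B$ block by block. Writing $N_{\ell}=I-N^-N$ and $N_r=I-NN^-$, I would first record $A^{GD1}$ in the form supplied by Theorem~\ref{gd1-deco} (in the reduced shape of the note following it): the $(1,1)$ block is $C^{-1}$, the $(1,2)$ block is $VN_r$, the $(2,1)$ block is $0$, and the $(2,2)$ block is $Z:=N^-NN^-+WN_r$ with $N_{\ell}WN_r=0$. Then I would write $B=P\bigl(\begin{smallmatrix}B_1&B_2\\ B_3&B_4\end{smallmatrix}\bigr)P^{-1}$ with the four blocks unknown. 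After this normalization the whole statement reduces to four block products and a comparison of entries.

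Next I would compute $AA^{GD1}$, $BA^{GD1}$, $A^{GD1}A$ and $A^{GD1}B$, relying only on the inner--inverse identities $NN^-N=N$ and $N^-NN^-N=N^-N$. These yield $N_rN=0$, $NN^-NN^-=NN^-$, $ZN=N^-N$ and $NZ=NN^-+NWN_r$, from which
\begin{equation*}
AA^{GD1}=P\begin{pmatrix} I & CVN_r\\ 0 & NN^-+NWN_r\end{pmatrix}P^{-1},
\qquad
A^{GD1}A=P\begin{pmatrix} I & 0\\ 0 & N^-N\end{pmatrix}P^{-1}.
\end{equation*}
The products $BA^{GD1}$ and $A^{GD1}B$ are then written in block form in terms of $B_1,\dots,B_4$ and $Z$.

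For (i) $\Rightarrow$ (ii) I would impose $AA^{GD1}=BA^{GD1}$ and $A^{GD1}A=A^{GD1}B$ entrywise. The $(1,1)$ and $(2,1)$ blocks of the first equation force $B_1=C$ and $B_3=0$; its $(2,2)$ block gives $B_4Z=NN^-+NWN_r$. The $(2,2)$ block of the second equation gives $ZB_4=N^-N$, and its $(1,2)$ block gives $C^{-1}B_2+VN_rB_4=0$, i.e.\ $B_2=-CVN_rB_4=-CV(I-NN^-)B_4$, which is exactly the off--diagonal entry displayed in (ii). Thus $B$ has the claimed form with $B_4$ satisfying the two prescribed relations. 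For (ii) $\Rightarrow$ (i) I would substitute this block form of $B$ back into the same four products and verify the two defining equalities of $\leq^{GD1}$ directly, which is routine once the identities above are in hand.

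The main obstacle is the one surplus block relation: the $(1,2)$ entry of $AA^{GD1}=BA^{GD1}$ produces $B_2Z=0$, which must be shown to be automatic rather than an extra hypothesis on $B$. I would dispose of it by observing that $N_rB_4Z=N_r(NN^-+NWN_r)=0$ because $N_rN=0$, so that $B_2Z=-CV\,(N_rB_4Z)=0$ whenever $B_4$ obeys the two prescribed relations. Establishing this redundancy, together with the care needed because $N^-$ is merely an inner inverse (so shortcuts such as $N^-NN^-=N^-$ are unavailable), is where the bookkeeping is most delicate; everything else is linear algebra on the blocks.
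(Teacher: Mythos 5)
Your proof is correct and follows exactly the route the paper intends: the paper states this corollary without proof (``In view of the Theorem \ref{gd1-deco}, we can verify the following result''), and that intended verification is precisely your block-wise comparison of $AA^{GD1}$, $BA^{GD1}$, $A^{GD1}A$ and $A^{GD1}B$ in the core--nilpotent decomposition, using the reduced form $Z=N^-NN^-+W(I-NN^-)$ with $(I-N^-N)W(I-NN^-)=0$ that the corollary itself employs. Your treatment of the surplus block equation $B_2Z=0$ --- showing it is automatic from $N_rN=0$ together with the prescribed relation $B_4Z=NN^-+NW(I-NN^-)$ --- correctly settles the only non-routine point, so both implications go through.
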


\section{1GD inverse}
In this section, we introduce 1GD inverse for square matrices along with a few characterizations of
these inverses. Since the methodology of the proofs is similar to those for GD1 inverses, the proofs will
not be included.
\begin{definition}
Let $A^-\in A\{1\}$ and $A^{GD}\in A\{GD\}$. The 1GD inverse of $A$, associated with $A^-$ and $A^{GD}$,  is denoted by $A^{1GD}$ and defined as $A^{1GD}= A^{-}AA^{GD}$.
\end{definition}
  \begin{example}\rm
Let $A=
    \begin{bmatrix}
     1 & 1  &   1 & 1\\
     0  & 0   & 0 & 1\\
     0  & 0   & 1 & 0\\
      0  & 0   & 0 & 0\\
    \end{bmatrix}$ and we fix  $A^{-}=\left[\begin{array}{crrc}
     1 & -1  &   -1 & 0\\
     0  & 0   & 0 & 0\\
     0  & 0   & 1 & 0\\
     0  & 1   & 0 & 0\\
    \end{array}\right]$. Clearly $ind(A)=2$ and we can calculate a GD inverse, $A^{GD}=
    \left[\begin{array}{crrc}
 1 & -1 & -1 & 2\\
0 & 0& 0 & 0\\
0 & 0& 1 & 0\\
 0 & 1 & 0 & 0\\
    \end{array}\right]$. Thus, for these fixed $A^-\in A\{1\}$ and $A^{GD}\in A\{GD\}$, the 1GD inverse of $A$ is given by $A^{1GD}=A^{-}AA^{GD}=\left[\begin{array}{crrc}
     1 & -1  &   -1 & 2\\
     0  & 0   & 0 & 0\\
     0  & 0   & 1 & 0\\
     0  & 1   & 0 & 1\\
    \end{array}\right]$.
\end{example}
\begin{theorem}
Let $A\in\mathbb{C}^{n\times n}$ with $A^-\in A\{1\}$ and $A^{GD}\in A\{GD\}$ fixed. Then
\begin{enumerate}
    \item[(i)] $A^{1GD}\in A\{1,2\}$.
    \item[(ii)] $A^mA^{1GD} =A^mA^{GD} $ and $ A^{1GD}A^m = A^{-}A^m$ for any positive integer $m$.
     \item[(iii)] $AA^{1GD}=P_{R(A),N(AA^{GD})}$.
    \item[(iv)] $A^{1GD}A=P_{R(A^{-}A),N(A)}$.
    \end{enumerate}
\end{theorem}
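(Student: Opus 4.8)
The plan is to mirror the proof of Theorem~\ref{theorem-1} exactly, exploiting the fact that the $1GD$ inverse $A^{1GD}=A^-AA^{GD}$ is the left--right dual of the construction $A^{GD1}=A^{GD}AA^-$: the roles of the inner inverse $A^-$ and the $G$-Drazin inverse $A^{GD}$ are simply interchanged. Throughout, the only two facts I would use are the inner-inverse identities $AA^-A=A$ and $AA^{GD}A=A$ (the latter being the first defining equation of a $G$-Drazin inverse). Every verification then reduces to substituting $A^{1GD}=A^-AA^{GD}$ into the required equation and telescoping these two identities.

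For (i), I would check the inner relation $AA^{1GD}A=AA^-AA^{GD}A=AA^{GD}A=A$, collapsing $AA^-A$ first and then $AA^{GD}A$. For the outer relation I would compute
\[
A^{1GD}AA^{1GD}=A^-AA^{GD}\,A\,A^-AA^{GD}=A^-(AA^{GD}A)A^-AA^{GD}=A^-(AA^-A)A^{GD}=A^-AA^{GD}=A^{1GD},
\]
first collapsing the central $AA^{GD}A$ and then the remaining $AA^-A$. The point to be careful about here is that $A^{GD}$ is \emph{only} an inner inverse, so the naive simplification $A^{GD}AA^{GD}=A^{GD}$ is \emph{not} available; the factors must be arranged so that each cancellation is one of the two legitimate inner identities. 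This is the single place where the argument differs conceptually from the Moore--Penrose setting, and it is essentially the only subtlety in the whole proof.

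For (ii), the case $m=1$ gives $AA^{1GD}=(AA^-A)A^{GD}=AA^{GD}$ and $A^{1GD}A=A^-(AA^{GD}A)=A^-A$. For $m\ge 2$ I would peel off one factor of $A$ and apply the central inner identity to the core: $A^mA^{1GD}=A^{m-1}(AA^-A)A^{GD}=A^mA^{GD}$ and $A^{1GD}A^m=A^-(AA^{GD}A)A^{m-1}=A^-A^m$, so no genuine induction is needed beyond this single reduction.

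Finally, parts (iii) and (iv) follow immediately from (i) and (ii). By (i) both $AA^{1GD}$ and $A^{1GD}A$ are idempotent, hence projectors. For (iii), the identity $AA^{1GD}=AA^{GD}$ from (ii) pins the null space as $N(AA^{GD})$, while $A^{1GD}\in A\{1\}$ forces $R(AA^{1GD})=R(A)$ through $R(A)=R(AA^{1GD}A)\subseteq R(AA^{1GD})\subseteq R(A)$; together these give $AA^{1GD}=P_{R(A),N(AA^{GD})}$. For (iv), symmetrically, $A^{1GD}A=A^-A$ fixes the range as $R(A^-A)$, and $A^{1GD}\in A\{1\}$ gives $N(A^{1GD}A)=N(A)$ since $AA^{1GD}A=A$ yields $N(A^{1GD}A)\subseteq N(A)$ while the reverse inclusion is trivial; hence $A^{1GD}A=P_{R(A^-A),N(A)}$. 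I expect no real obstacle: the whole statement is the formal dual of Theorem~\ref{theorem-1}, and the only care required is the factor-ordering noted in (i).
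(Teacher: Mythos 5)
Your proposal is correct and is exactly the approach the paper intends: the paper omits this proof, stating that it follows the same methodology as Theorem~\ref{theorem-1}, and your argument is precisely that dual mirror, using only $AA^-A=A$ and $AA^{GD}A=A$ with the factor-ordering handled properly (including the valid observation that $A^{GD}AA^{GD}=A^{GD}$ cannot be assumed). Nothing is missing.
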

\begin{theorem}
Let $A \in\cnn$ with $A^-\in A\{1\}$ and $A^{GD}\in A\{GD\}$ fixed. The 1GD inverse, $A^{1GD}$ of $A$ is the unique matrix that satisfies
\begin{equation*}
  AX = P_{R(A),N(AA^{GD})} \quad  \mbox{ and } \quad R(X)\subseteq R(A^{-}A).
\end{equation*}
 \end{theorem}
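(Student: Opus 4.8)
The plan is to dualise the uniqueness argument already used for the GD1 inverse (the second theorem of Section~2), replacing the projector $A^{GD}A$ appearing there by the projector $A^-A$. For the existence half I would simply take $X=A^{1GD}=A^-AA^{GD}$ and verify the two requirements. The first, $AX=P_{R(A),N(AA^{GD})}$, is precisely part~(iii) of the preceding theorem on the elemental properties of $A^{1GD}$, so no extra computation is needed. For the second, writing $A^{1GD}=(A^-A)A^{GD}$ shows at once that $R(A^{1GD})\subseteq R(A^-A)$. Hence $A^{1GD}$ satisfies both conditions.

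For uniqueness, suppose $X$ and $Y$ both satisfy the two displayed conditions. Equating the first condition for each gives $A(X-Y)=0$, whence $R(X-Y)\subseteq N(A)$. From the range conditions $R(X)\subseteq R(A^-A)$ and $R(Y)\subseteq R(A^-A)$ I also obtain $R(X-Y)\subseteq R(A^-A)$. Since $A^-A$ is idempotent (because $A^-AA^-A=A^-(AA^-A)=A^-A$), its range and null space intersect trivially, so once $N(A)$ is identified with $N(A^-A)$ I may conclude $R(X-Y)\subseteq R(A^-A)\cap N(A^-A)=\{0\}$, that is, $X=Y$.

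The one step that carries the whole argument is the identity $N(A)=N(A^-A)$, which here plays the role that $N(A)=N(A^{GD}A)$ played in the GD1 proof. The inclusion $N(A)\subseteq N(A^-A)$ is immediate, and the reverse direction follows from the inner-inverse relation $AA^-A=A$: if $A^-Ax=0$, then $Ax=AA^-Ax=0$. Apart from this observation the proof is entirely routine and parallels the GD1 case almost verbatim, so I expect no genuine obstacle beyond the bookkeeping of swapping left for right and $A^{GD}A$ for $A^-A$.
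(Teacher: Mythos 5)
Your proof is correct and follows exactly the route the paper intends: the paper omits this proof, stating that the methodology parallels the GD1 case, and your argument is precisely that dualization (existence via part~(iii) of the elemental-properties theorem and the factorization $A^{1GD}=(A^-A)A^{GD}$, uniqueness via $R(X-Y)\subseteq R(A^-A)\cap N(A^-A)=\{0\}$ for the idempotent $A^-A$). Your explicit justification of $N(A)=N(A^-A)$ from $AA^-A=A$ is a detail the paper's GD1 proof leaves implicit, so nothing is missing.
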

\begin{theorem}
Let $A\in\cnn$ with $A^-\in A\{1\}$ and $A^{GD}\in A\{GD\}$ fixed. Then the following statements are equivalent:
\begin{enumerate}
     \item[(i)] $X = A^{1GD}$.
     \item[(ii)] $AX = AA^{GD}$ and $R(X)=R(A^{-}A)$.
     \item[(iii)] $A^{GD}AX = A^{GD}AA^{GD}$ and $R(X)=R(A^{-}A)$.
\end{enumerate}
\end{theorem}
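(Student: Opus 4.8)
The plan is to prove the cycle (i) $\Rightarrow$ (ii) $\Rightarrow$ (i) together with the easy equivalence (ii) $\Leftrightarrow$ (iii), mirroring the proof of Theorem \ref{theorem-2} under the role-swap that interchanges $A^-$ and $A^{GD}$. Note that in the defining expression $A^{1GD}=A^-AA^{GD}$ the product $A^-A$ now sits on the left, so it is the range of $A^-A$ that is controlled, exactly as $R(A^{GD}A)$ was controlled there. The three identities I expect to use repeatedly are the inner-inverse relation $AA^-A=A$, the G-Drazin relation $AA^{GD}A=A$, and the idempotency of $A^-A$, which follows from $(A^-A)(A^-A)=A^-(AA^-A)=A^-A$.

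For (i) $\Rightarrow$ (ii), I would substitute $X=A^{1GD}=A^-AA^{GD}$ directly: the equation $AX=AA^{GD}$ falls out of $AA^-AA^{GD}=AA^{GD}$. For the range equality, $R(X)\subseteq R(A^-A)$ is immediate from writing $X=(A^-A)A^{GD}$, while the reverse inclusion comes from computing $XA=A^-AA^{GD}A=A^-A$, so that $R(A^-A)=R(XA)\subseteq R(X)$.

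For (ii) $\Rightarrow$ (i), the key step is to use the range hypothesis $R(X)=R(A^-A)$ to write $X=A^-AY$ for some $Y\in\cnn$; idempotency of $A^-A$ then gives $A^-AX=X$. Substituting the other hypothesis $AX=AA^{GD}$ yields $X=A^-AX=A^-(AX)=A^-AA^{GD}=A^{1GD}$, which closes the cycle. The equivalence (ii) $\Leftrightarrow$ (iii) is purely mechanical: pre-multiplying $AX=AA^{GD}$ by $A^{GD}$ gives (iii), and conversely pre-multiplying $A^{GD}AX=A^{GD}AA^{GD}$ by $A$ and invoking $AA^{GD}A=A$ recovers $AX=AA^{GD}$.

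No step presents a genuine obstacle, as the argument is essentially algebraic once the three identities above are isolated; the only point requiring care is to ensure the range inclusions are argued in the correct order in (i) $\Rightarrow$ (ii), and that the representation $X=A^-AY$ in (ii) $\Rightarrow$ (i) is combined with idempotency of $A^-A$ rather than transcribed from the null-space version used for GD1 inverses in Theorem \ref{theorem-3}.
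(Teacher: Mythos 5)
Your proposal is correct and follows essentially the same route as the paper: the paper omits the proof of this theorem precisely because it is the exact dual of its proof of Theorem \ref{theorem-2}, and your argument (using $AA^{GD}A=A$ for (i) $\Rightarrow$ (ii), the factorization $X=A^{-}AY$ plus idempotency of $A^{-}A$ for (ii) $\Rightarrow$ (i), and pre-multiplication by $A^{GD}$ and $A$ for (ii) $\Leftrightarrow$ (iii)) is exactly that dualization. No gaps.
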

\begin{theorem}
Let $A\in\cnn$ with $A^-\in A\{1\}$ and $A^{GD}\in A\{GD\}$ fixed.  Then the following statements are equivalent:
\begin{enumerate}
     \item[(i)] $X=A^{1GD}$.
     \item[(ii)] $XA=A^{-}A$ and $N(X)=N(AA^{GD})$.
     \item[(iii)] $XAA^{-}=A^{-}AA^{-}$ and $N(X)=N(AA^{GD})$.
\end{enumerate}
\end{theorem}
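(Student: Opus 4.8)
The plan is to mirror the null-space characterization of the GD1 inverse, Theorem~\ref{theorem-3}, interchanging the roles of the factors and multiplying on the opposite side: here the relevant idempotent is $M:=AA^{GD}$ (it is idempotent because $AA^{GD}A=A$ gives $M^2=M$), and the two ingredients I would reuse are the elementary $m=1$ identities $A^{1GD}A=A^-A$ and $AA^{1GD}=AA^{GD}$ already established among the basic properties of the 1GD inverse above. I would prove the cycle (i)$\Rightarrow$(ii)$\Rightarrow$(i) and then treat (ii)$\Leftrightarrow$(iii) as a short separate step.

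For (i)$\Rightarrow$(ii), taking $X=A^{1GD}=A^-AA^{GD}$ gives $XA=A^-AA^{GD}A=A^-A$ at once from $AA^{GD}A=A$. For the equality $N(X)=N(AA^{GD})$, the inclusion $N(AA^{GD})\subseteq N(A^{1GD})$ is immediate from the factorization $X=A^-(AA^{GD})$, while the reverse inclusion follows from $AA^{1GD}=AA^{GD}$: if $A^{1GD}z=0$ then $AA^{GD}z=AA^{1GD}z=0$.

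The crux is (ii)$\Rightarrow$(i). From $N(X)=N(M)$ I have in particular $N(M)\subseteq N(X)$, equivalently $R(X^*)\subseteq R(M^*)$, so the matrix equation $X=YM$ is solvable and I may write $X=YAA^{GD}$ for some $Y$. Idempotency of $M$ then yields $X=XAA^{GD}$, and the hypothesis $XA=A^-A$ finishes the argument: $X=(XA)A^{GD}=A^-AA^{GD}=A^{1GD}$.

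Finally, (ii)$\Rightarrow$(iii) is obtained by post-multiplying $XA=A^-A$ by $A^-$, and (iii)$\Rightarrow$(ii) by post-multiplying $XAA^-=A^-AA^-$ by $A$ and simplifying with $AA^-A=A$; the shared condition $N(X)=N(AA^{GD})$ carries over unchanged. I expect the only genuinely nontrivial point to be the right-factorization $X=YAA^{GD}$ used in (ii)$\Rightarrow$(i), that is, converting the geometric null-space hypothesis back into the algebraic identity $X=XAA^{GD}$; everything else reduces to the inner-inverse and G-Drazin relations $AA^-A=A$ and $AA^{GD}A=A$.
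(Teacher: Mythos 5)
Your proposal is correct and follows essentially the same route as the paper: the paper omits this proof, referring to the GD1 analogues, and your argument is exactly the dualization of the paper's proof of the GD1 range characterization (Theorem~\ref{theorem-2}) --- factor $X$ through the idempotent $AA^{GD}$ on the right using the null-space hypothesis, conclude $X=XAA^{GD}=(XA)A^{GD}=A^{-}AA^{GD}$, and handle (ii)$\Leftrightarrow$(iii) by post-multiplication by $A^{-}$ and $A$ respectively. No gaps; the justification of the factorization $X=YAA^{GD}$ via $N(AA^{GD})\subseteq N(X)$ is the intended key step.
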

\begin{theorem}
Let $A\in\cnn$ with $A^-\in A\{1\}$ and $A^{GD}\in A\{GD\}$ fixed.  Then the following statements are equivalent:
\begin{enumerate}
     \item[(i)] $X=A^{1GD}$.
     \item[(ii)] $XAX=X,~R(X)=R(A^{-}A)$, and $ N(X)=N(AA^{GD})$.
      \item[(iii)] $XAX=X,~XA=A^{-}A$, and $ AX=AA^{GD}$.
   \end{enumerate}
\end{theorem}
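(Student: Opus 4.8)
The plan is to prove the three statements equivalent along the cyclic chain (i) $\Rightarrow$ (ii) $\Rightarrow$ (iii) $\Rightarrow$ (i), mirroring the argument for the GD1 inverse in Theorem \ref{thm4} with the roles of the two outer factors interchanged, since $A^{1GD}=A^-AA^{GD}$ is the formal dual of $A^{GD1}=A^{GD}AA^-$. For (i) $\Rightarrow$ (ii) there is essentially nothing new to do: the first theorem of this section gives $A^{1GD}\in A\{1,2\}$, so $X=A^{1GD}$ satisfies $XAX=X$, while the two preceding characterization theorems for the 1GD inverse supply $R(X)=R(A^-A)$ and $N(X)=N(AA^{GD})$ directly.

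The crux is (ii) $\Rightarrow$ (iii), where I would reconstruct the two one-sided identities $AX=AA^{GD}$ and $XA=A^-A$ from the outer-inverse equation together with the range and null-space data. First, rewriting $XAX=X$ as $X(I-AX)=0$ shows $R(I-AX)\subseteq N(X)=N(AA^{GD})$, whence $AA^{GD}(I-AX)=0$; using $AA^{GD}A=A$ this collapses to $AX=AA^{GD}$. Second, the condition $R(X)=R(A^-A)$ lets me write $X=A^-AY$ for some $Y\in\cnn$, and then, exploiting the idempotency of $A^-A$ and the identity just obtained, I would compute $XA=A^-AXA=A^-(AX)A=A^-(AA^{GD})A=A^-A$. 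The implication (iii) $\Rightarrow$ (i) is then immediate: substituting $AX=AA^{GD}$ and $XA=A^-A$ into $X=XAX$ yields $X=X(AX)=XAA^{GD}=(XA)A^{GD}=A^-AA^{GD}=A^{1GD}$.

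The only delicate point, and the step I would verify most carefully, is the bookkeeping in (ii) $\Rightarrow$ (iii): one must use the correct absorption identities, namely that $A^-A$ and $AA^{GD}$ are idempotent and that $AA^{GD}A=A$ and $AA^-A=A$, and apply each on the appropriate side. No genuinely new obstacle arises beyond this, which is exactly why the authors remark that the methodology coincides with the GD1 case; the whole statement is obtained from Theorem \ref{thm4} under the formal interchange of $A^-$ and $A^{GD}$ (equivalently $A^{GD1}\mapsto A^{1GD}$, $A^{GD}A\mapsto A^-A$, and $AA^-\mapsto AA^{GD}$).
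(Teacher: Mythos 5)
Your proposal is correct and follows exactly the route the paper intends: the paper omits this proof, stating the methodology coincides with the GD1 case, and your argument is precisely the proof of Theorem \ref{thm4} under the formal duality $A^{GD}A\mapsto A^{-}A$, $AA^{-}\mapsto AA^{GD}$, with each absorption identity ($AA^{GD}A=A$, $AA^{-}A=A$, idempotency of $A^{-}A$) applied on the correct side. No gaps.
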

\begin{theorem}
Let $A\in\cnn$ with $A^-\in A\{1\}$ and $A^{GD}\in A\{GD\}$ fixed.  Then the following statements are equivalent:
\begin{enumerate}
     \item[(i)] $X = A^{1GD}$.
     \item[(ii)] $XAX =X,~XA= A^{-}A,~ AX = AA^{GD}$, and $AXA = A$.
     \item[(iii)] $XAX =X,~ XA= A^{-}A$, and $AX = AA^{GD}$
     \item [(iv)] $A^{-}AX =X,~XA= A^{-}A,~ AX = AA^{GD}$, and $XAA^{GD} = X$.
    \end{enumerate}
\end{theorem}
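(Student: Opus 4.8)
The plan is to establish the cyclic chain of implications $(i)\Rightarrow(ii)\Rightarrow(iii)\Rightarrow(iv)\Rightarrow(i)$, mirroring the argument used for the GD1 inverse in Theorem \ref{thm5}. Throughout, I would lean on three facts that are the 1GD duals of those recorded in Theorem \ref{theorem-1}: the defining identity $A^{1GD}=A^{-}AA^{GD}$, the membership $A^{1GD}\in A\{1,2\}$, and the two one-sided simplifications $A^{1GD}A=A^{-}A$ and $AA^{1GD}=AA^{GD}$ (the latter two being the $m=1$ cases of the identities $A^{1GD}A^{m}=A^{-}A^{m}$ and $A^{m}A^{1GD}=A^{m}A^{GD}$ established for the 1GD inverse).

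For $(i)\Rightarrow(ii)$ I would set $X=A^{1GD}$. Then $XAX=X$ and $AXA=A$ hold because $A^{1GD}\in A\{1,2\}$, while the remaining equalities $XA=A^{-}A$ and $AX=AA^{GD}$ are exactly the two one-sided simplifications just recalled. The implication $(ii)\Rightarrow(iii)$ is then immediate, since the conditions of $(iii)$ are a subset of those of $(ii)$; in effect one only discards the redundant equation $AXA=A$.

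For $(iii)\Rightarrow(iv)$, assuming $XAX=X$, $XA=A^{-}A$ and $AX=AA^{GD}$, I would feed the two factorizations into the outer-inverse equation from opposite sides. Replacing the leftmost factor $XA$ by $A^{-}A$ gives $A^{-}AX=(XA)X=XAX=X$, and replacing the rightmost factor $AX$ by $AA^{GD}$ gives $XAA^{GD}=X(AX)=XAX=X$. Combining these with the two equalities already assumed yields all four conditions of $(iv)$.

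Finally, $(iv)\Rightarrow(i)$ is the crux of the argument, although it is very short: from $A^{-}AX=X$ and $AX=AA^{GD}$ one computes $X=A^{-}AX=A^{-}(AX)=A^{-}AA^{GD}=A^{1GD}$, which is precisely the definition of the 1GD inverse. I do not expect a genuine obstacle anywhere in this theorem; the entire proof is the formal left/right dual of the GD1 case, so the only point demanding care is bookkeeping the placement of the factors $A^{-}A$ and $AA^{GD}$, since in the dualization it is easy to attach a product on the wrong side.
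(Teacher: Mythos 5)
Your proposal is correct and matches the paper's intended argument: the paper omits the proof of this theorem, stating only that the methodology mirrors the GD1 case, and for the GD1 analogue (Theorem~\ref{thm5}) it prescribes exactly the cyclic chain (i) $\Rightarrow$ (ii) $\Rightarrow$ (iii) $\Rightarrow$ (iv) $\Rightarrow$ (i) that you carry out, with the same elementary substitutions dualized to $A^{1GD}=A^{-}AA^{GD}$. Your verification of the auxiliary facts ($A^{1GD}\in A\{1,2\}$, $A^{1GD}A=A^{-}A$, $AA^{1GD}=AA^{GD}$) and each implication is sound.
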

\begin{theorem}
Let $M,N\in\cnn$ and $A\in\cnn$ with $ind(A)=k$.
\begin{enumerate}
    \item[(i)] There exist idempotent matrices $M$ and $N$ such that $$A^{-}A^kM=0,~MA^k=0,~  NA^-A^k=0,~ A^kN=0.$$
\item[(ii)] There exists a matrix $X$ such that
$$
\textup{rank}\begin{bmatrix}
 A & I-M\\
 I-N & X
\end{bmatrix}=\textup{rank}(A).
$$
\end{enumerate}
Moreover, $M=I-AA^{1GD}$, $N=I-A^{1GD}A$, and $X=A^{1GD}$.
\end{theorem}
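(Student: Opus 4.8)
The plan is to mirror the proof of the corresponding rank theorem for GD1 inverses, interchanging the roles of $A^-$ and $A^{GD}$ throughout. First I would set $M:=I-AA^{1GD}$ and $N:=I-A^{1GD}A$. Since $A^{1GD}\in A\{1,2\}$, both $AA^{1GD}$ and $A^{1GD}A$ are idempotent, hence $M$ and $N$ are idempotent, which settles the idempotency requirement in part (i). It is also convenient to record at the outset the reduction $AA^{1GD}=AA^-AA^{GD}=(AA^-A)A^{GD}=AA^{GD}$, which will be used repeatedly.

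The core of part (i) is the verification of the four annihilation identities, and here I would lean on the dual power identities $A^mA^{1GD}=A^mA^{GD}$ and $A^{1GD}A^m=A^-A^m$ (established among the elementary properties of $1GD$ inverses), together with the defining G-Drazin relations $AA^{GD}A=A$, $A^{k+1}A^{GD}=A^k$ and $A^{GD}A^{k+1}=A^k$. For $A^-A^kM$ I would expand $A^-A^k(I-AA^{1GD})$ and use $AA^{1GD}=AA^{GD}$ to reduce $A^-A^kAA^{GD}=A^-A^{k+1}A^{GD}=A^-A^k$; for $MA^k$ I would use $AA^{GD}A=A$; for $NA^-A^k$ I would first collapse $AA^-A^k=A^k$ and then apply $A^{1GD}A^k=A^-A^k$; and for $A^kN$ I would use $A^kA^{1GD}=A^kA^{GD}$ followed by $A^kA^{GD}A=A^k$. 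In each case the expression telescopes to a difference $A^{\bullet}-A^{\bullet}=0$. The boundary case $k=0$ is immediate, since then $A$ is nonsingular, $A^{1GD}=A^{-1}$, and $M=N=0$.

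For part (ii) I would take $X=A^{1GD}$, observe that $I-M=AA^{1GD}$ and $I-N=A^{1GD}A$, and then sandwich the resulting block matrix between the unipotent block-triangular factors $\begin{bmatrix}I&0\\-A^{1GD}&I\end{bmatrix}$ and $\begin{bmatrix}I&-A^{1GD}\\0&I\end{bmatrix}$. Using the outer-inverse relation $A^{1GD}AA^{1GD}=A^{1GD}$, the lower-left block becomes $-A^{1GD}A+A^{1GD}A=0$ and the lower-right block becomes $-A^{1GD}AA^{1GD}+A^{1GD}=0$, while the subsequent right multiplication cancels the remaining off-diagonal block, so the product reduces to $\begin{bmatrix}A&0\\0&0\end{bmatrix}$. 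Since the triangular factors are invertible, the rank of the original block matrix equals $\textup{rank}(A)$.

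The step demanding the most care is the verification of the four identities in part (i): because the roles of $A^-$ and $A^{GD}$ are swapped relative to the GD1 theorem, I must be vigilant about which side each G-Drazin identity and each power identity is applied to, and about keeping the index $k$ aligned through the telescoping. Everything else reduces to routine block manipulation.
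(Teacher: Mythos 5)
Your proposal is correct and follows essentially the same route as the paper: the paper omits the proof for the 1GD case, noting it mirrors the GD1 rank theorem, whose proof likewise sets $M=I-AA^{GD1}$, $N=I-A^{GD1}A$, verifies the four annihilation identities via the inner-inverse and G-Drazin relations, handles $k=0$ trivially, and reduces the block matrix by the same unipotent triangular factors. Your dualization (using $AA^{1GD}=AA^{GD}$, the power identities $A^mA^{1GD}=A^mA^{GD}$, $A^{1GD}A^m=A^-A^m$, and the outer-inverse relation in the block reduction) is exactly the intended argument.
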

\begin{theorem}
Let $A\in\cnn$ with $A^-\in A\{1\}$ and $A^{GD}\in A\{GD\}$ fixed. Then $A^{1GD}$ is the $(A^{-}A,AA^{GD})$ inverse of $A$.
\end{theorem}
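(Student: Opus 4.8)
The plan is to verify directly that $X := A^{1GD} = A^{-}AA^{GD}$ satisfies the four defining conditions of the $(B,C)$-inverse from Definition \ref{BC-invofA}, taking $B := A^{-}A$ and $C := AA^{GD}$. This is the exact dual of the result already proved for the GD1 inverse, so the argument is purely computational and relies only on the two absorption identities $AA^{-}A = A$ (since $A^{-}\in A\{1\}$) and $AA^{GD}A = A$ (since every G-Drazin inverse is in particular an inner inverse).

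First I would establish the two ``inverse-like'' equations. For $XAB = B$, I would write $XAB = A^{-}AA^{GD}AA^{-}A$ and collapse it by applying $AA^{-}A = A$ to the last three factors and then $AA^{GD}A = A$ to what remains, obtaining $A^{-}A = B$. Symmetrically, for $CAX = C$ I would write $CAX = AA^{GD}AA^{-}AA^{GD}$ and collapse from the left, first using $AA^{GD}A = A$ and then $AA^{-}A = A$, to arrive at $AA^{GD} = C$.

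Next I would handle the range and null-space inclusions by exhibiting the appropriate factorizations of $X$. Writing $X = A^{-}AA^{GD} = (A^{-}A)A^{GD} = BA^{GD}$ immediately gives $R(X)\subseteq R(B)$, while writing $X = A^{-}(AA^{GD}) = A^{-}C$ immediately gives $N(C)\subseteq N(X)$. With all four conditions of Definition \ref{BC-invofA} verified, the uniqueness clause of that definition identifies $A^{1GD}$ as the $(A^{-}A, AA^{GD})$-inverse of $A$.

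I do not anticipate a genuine obstacle here; the only point requiring care is the bookkeeping in the two collapses, where the two absorption identities must be applied in the correct order, and where one must remember that the G-Drazin inverse enters only through $AA^{GD}A = A$ rather than through any stronger Moore--Penrose-type property.
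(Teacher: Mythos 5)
Your proposal is correct and matches the paper's method: the paper omits the proof for the 1GD case but proves the dual GD1 statement by exactly this direct verification of the four conditions in Definition \ref{BC-invofA}, using $AA^{GD}A=A$ and $AA^{-}A=A$ for the equations $XAB=B$ and $CAX=C$, and the factorizations $X=BA^{GD}$ and $X=A^{-}C$ for the range and null-space inclusions. No gaps; your computation is the mirror image of the paper's.
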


\subsection{A relation based on 1GD inverses}
\begin{definition}
Let $A, B \in\mathbb{C}^{n\times n}$. We will say that $A$ is below $B$ under the relation $\leq ^{1GD}$ if $AA^{1GD}=BA^{1GD}$  and $A^{1GD}A=A^{1GD}B$ for some $A^{1GD}\in A\{1GD\}$.
\end{definition}

\begin{proposition}
Let $A, B \in\mathbb{C}^{n\times n}$.   $A\leq ^{1GD}B$  then $A\leq ^{-,D}B$.
\end{proposition}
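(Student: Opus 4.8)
The plan is to establish this proposition as the exact left--right dual of Proposition~\ref{gd1-d1}. First I would pin down the relation $\leq^{-,D}$: by analogy with the definition of $\leq^{D,-}$ (which is built from the D1 inverse $A^{D,-}=A^DAA^-$), the relation $\leq^{-,D}$ is built from the 1D inverse $A^{-,D}=A^-AA^D$, so that $A\leq^{-,D}B$ means $A^{-,D}A=A^{-,D}B$ and $AA^{-,D}=BA^{-,D}$. The key structural remark is that the 1GD inverse $A^{1GD}=A^-AA^{GD}$ and the 1D inverse $A^{-,D}=A^-AA^D$ are obtained from the GD1 inverse $A^{GD1}=A^{GD}AA^-$ and the D1 inverse $A^{D,-}=A^DAA^-$ of Proposition~\ref{gd1-d1} by reversing the order of the three factors. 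Consequently, the entire argument should be recovered by reversing every product and interchanging the roles of the two defining equalities of the relation.

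Concretely, I would first unpack the hypothesis $A\leq^{1GD}B$: there exist $A^-\in A\{1\}$ and $A^{GD}\in A\{GD\}$ such that, with $A^{1GD}=A^-AA^{GD}$, one has $AA^{1GD}=BA^{1GD}$ and $A^{1GD}A=A^{1GD}B$. I then fix $A^{-,D}=A^-AA^D$ using the same inner inverse $A^-$, and verify the two required identities separately.

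The identity $AA^{-,D}=BA^{-,D}$ is the easy half. Starting from $AA^{-,D}=AA^-AA^D$ and rewriting the central factor via $A=AA^{GD}A$, I would recognize the block $A^{1GD}=A^-AA^{GD}$, so that $AA^{-,D}=AA^{1GD}AA^D$; applying $AA^{1GD}=BA^{1GD}$ and then collapsing $AA^{GD}A=A$ once more returns $BA^-AA^D=BA^{-,D}$. Only the inner-inverse identity $AA^-A=A$ and the G-Drazin identity $AA^{GD}A=A$ are needed here.

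The identity $A^{-,D}A=A^{-,D}B$ is the main obstacle, corresponding to the delicate index computation in Proposition~\ref{gd1-d1}. Here I would replace the trailing factor $A$ by $AA^-AA^{GD}A$ (which equals $A$), group a factor $A^{1GD}$ to rewrite the expression as $A^{-,D}AA^{1GD}A$, use $A^{1GD}A=A^{1GD}B$ to switch the trailing $A$ to $B$, and then expand and collapse the inner-inverse factors to arrive at $A^-AA^DAA^{GD}B$. The crux is to reduce the block $A^DAA^{GD}$ to $A^D$ using the three power identities $A^DA=(A^D)^{k+1}A^{k+1}$, $A^{k+1}A^{GD}=A^k$, and $(A^D)^{k+1}A^k=A^D$, which leaves $A^-AA^DB=A^{-,D}B$. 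The care required is purely in tracking these exponents so that $A^{GD}$ is annihilated against precisely the power $A^{k+1}$; with that bookkeeping carried out, no further idea is needed and the proposition follows.
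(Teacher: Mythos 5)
Your proposal is correct and is precisely the argument the paper intends: the paper omits this proof, stating the methodology is dual to that of Proposition~\ref{gd1-d1}, and your dualization (reversing all products, using the same inner inverse $A^-$ in $A^{-,D}=A^-AA^D$, and replacing the identity $A^{GD}A^{k+1}=A^k$ by its mirror $A^{k+1}A^{GD}=A^k$ in the index computation $A^DAA^{GD}=(A^D)^{k+1}A^{k+1}A^{GD}=(A^D)^{k+1}A^k=A^D$) reproduces that proof step for step. All the identities you invoke check out, so this matches the paper's (implicit) proof.
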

\begin{corollary}
Let $A, B \in\mathbb{C}^{n\times n}$  be matrices of index at most 1. If  $A\leq ^{1GD}B$  then  $A{\#}\leq B$ , where ${\#}\leq $
is the left sharp partial order.
\end{corollary}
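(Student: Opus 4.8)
The plan is to mirror the derivation of Corollary \ref{corpo} in the dual setting, where that corollary concatenated Proposition \ref{gd1-d1} with Theorem \ref{right-par}. The preceding Proposition already supplies the first link of the chain, namely that $A\leq^{1GD}B$ forces $A\leq^{-,D}B$; what remains is a dual of Theorem \ref{right-par} asserting that $A\leq^{-,D}B$ implies $A\#\leq B$ for matrices of index at most one. I would first record that the relation $\leq^{-,D}$ is defined, by analogy with $\leq^{D,-}$, through $A^{-,D}A=A^{-,D}B$ and $AA^{-,D}=BA^{-,D}$, where $A^{-,D}=A^-AA^D$ is the 1D inverse.

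To obtain the required dual implication I would pass to conjugate transposes. The key observation is the identity $(A^{-,D})^*=(A^D)^*A^*(A^-)^*=(A^*)^D A^*(A^*)^-=(A^*)^{D,-}$, in which $(A^*)^-:=(A^-)^*$ is an inner inverse of $A^*$ (since $AA^-A=A$ transposes to $A^*(A^-)^*A^*=A^*$) and $(A^D)^*=(A^*)^D$. Conjugate-transposing the two defining equations of $A\leq^{-,D}B$ then yields $A^*(A^*)^{D,-}=B^*(A^*)^{D,-}$ and $(A^*)^{D,-}A^*=(A^*)^{D,-}B^*$, which are precisely the two equations defining $A^*\leq^{D,-}B^*$. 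Since $A$ and $B$ have index at most one, so do $A^*$ and $B^*$, and Theorem \ref{right-par} applies to give $A^*\leq\#B^*$.

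Finally I would translate the conclusion back: by Definition \ref{sharp-def}(ii), $A^*\leq\#B^*$ means $(A^*)^2=B^*A^*$ and $R((A^*)^*)\subseteq R((B^*)^*)$, that is $A^2=AB$ and $R(A)\subseteq R(B)$, which is exactly $A\#\leq B$ by Definition \ref{sharp-def}(i). The main obstacle is purely organizational rather than computational: establishing that conjugate transposition interchanges $\leq^{-,D}$ with $\leq^{D,-}$ and the left sharp order with the right sharp order, all of which reduce to the single identity $(A^{-,D})^*=(A^*)^{D,-}$ together with the elementary equivalences above. If instead \cite{D1} already contains the dual statement $A\leq^{-,D}B\Rightarrow A\#\leq B$, the corollary follows in one line by concatenating it with the preceding Proposition, exactly paralleling Corollary \ref{corpo}.
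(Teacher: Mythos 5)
Your proposal is correct, and its skeleton is exactly the one the paper intends: the paper omits every proof in the 1GD section on the grounds that they dualize the GD1 arguments, so its implicit proof of this corollary is the one-line concatenation, mirroring Corollary \ref{corpo}, of the preceding Proposition ($A\leq^{1GD}B \Rightarrow A\leq^{-,D}B$) with a dual of Theorem \ref{right-par}. Where you genuinely add something is in noticing that this dual is nowhere stated in the paper --- Theorem \ref{right-par} is only recorded in the form $A\leq^{D,-}B \Rightarrow A\leq\# B$ --- and in supplying it rigorously by conjugate transposition rather than by an appeal to \cite{D1}. Your key identity $(A^{-,D})^*=(A^D)^*A^*(A^-)^*=(A^*)^D A^*(A^*)^-=(A^*)^{D,-}$ is sound: $(A^-)^*$ is indeed an inner inverse of $A^*$, and $(A^D)^*=(A^*)^D$ follows from uniqueness of the Drazin inverse, since the three defining equations are stable under conjugate transposition (using that $A^D$ commutes with $A$). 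Transposition preserves the index, converts the two equalities defining $A\leq^{-,D}B$ into precisely those defining $A^*\leq^{D,-}B^*$ (with respect to the inner inverse $(A^-)^*$), and interchanges parts (i) and (ii) of Definition \ref{sharp-def}; so the chain $A\leq^{1GD}B \Rightarrow A\leq^{-,D}B \Rightarrow A^*\leq^{D,-}B^* \Rightarrow A^*\leq\# B^* \Rightarrow A\,\#\leq B$ closes without gaps. The payoff of your route is self-containedness: a reader who has only the theorems quoted in this paper can verify the corollary, whereas the paper's implicit proof requires either trusting the duality convention or consulting \cite{D1} for the left-handed version of Theorem \ref{right-par}. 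The only point to make explicit, as you do, is the definition of the relation $\leq^{-,D}$ itself, which the paper uses but never states; your reconstruction (the same two equalities with $A^{-,D}=A^-AA^D$ in place of $A^{D,-}$) is the intended one.
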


\begin{theorem}
Let $A, B \in\mathbb{C}^{n\times n}$  be matrices of index at most 1. Then $A\leq ^{1GD}B$ if and only if $A{\#}\leq B$.
\end{theorem}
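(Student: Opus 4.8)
The plan is to prove the two implications separately, paralleling the proof of the GD1/right-sharp equivalence. The forward implication $A\leq^{1GD}B\Rightarrow A{\#}\leq B$ is already in hand: it is precisely the corollary stated immediately above (the dual of Corollary~\ref{corpo}), so I would simply invoke it. All the work is in the converse.

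For the converse, assume $A{\#}\leq B$, so by Definition~\ref{sharp-def}(i) we have $A^2=AB$ and $R(A)\subseteq R(B)$. I would first record the index-one G-Drazin identities that will drive every computation: since $ind(A)\le 1$, the reduced defining conditions give $AA^{GD}A=A$ and $AA^{GD}=A^{GD}A$, and in addition $A^{GD}A^2=A$ and $A^2A^{GD}=A$. Next I need an inner inverse compatible with the order, namely $A^-\in A\{1\}$ with $AA^-=BA^-$ and $A^-A=A^-B$. Corollary~\ref{corminus} supplies such an $A^-$ for the \emph{right} sharp order; the left-sharp version required here follows by applying that corollary to $A^*,B^*$---observing that $A{\#}\leq B$ is equivalent to $A^*\leq\# B^*$---and transposing the resulting identities back, which returns exactly $AA^-=BA^-$ and $A^-A=A^-B$.

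With $A^-$ and some $A^{GD}\in A\{GD\}$ fixed, I would set $A^{1GD}=A^-AA^{GD}$ and check the two defining relations of $\leq^{1GD}$ by substitution. For the first, $AA^{1GD}=(AA^-A)A^{GD}=AA^{GD}$, while $BA^{1GD}=(BA^-)AA^{GD}=(AA^-)AA^{GD}=AA^{GD}$, so $AA^{1GD}=BA^{1GD}$. For the second, $A^{1GD}A=A^-(AA^{GD}A)=A^-A$, whereas $A^{1GD}B=A^-(AA^{GD})B=A^-A^{GD}(AB)=A^-A^{GD}A^2=A^-A$, using commutativity, $AB=A^2$, and $A^{GD}A^2=A$; hence $A^{1GD}A=A^{1GD}B$. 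This yields $A\leq^{1GD}B$ and closes the equivalence.

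I expect the only delicate step to be the extraction of the inner inverse carrying the two one-sided intertwining identities, because the cited corollary is phrased for the right sharp order; the conjugate-transpose duality $A{\#}\leq B\iff A^*\leq\# B^*$ resolves this cleanly, after which the remaining verifications are routine consequences of the index-one G-Drazin identities.
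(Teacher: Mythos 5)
Your proof is correct and is essentially the paper's own approach: the paper omits the proof of this statement (deferring to the GD1 case), and your argument is exactly the dualization of that GD1 proof --- forward implication by citing the corollary on $\leq^{1GD}$ versus the left sharp order, converse by extracting an inner inverse $A^-$ with $AA^-=BA^-$ and $A^-A=A^-B$, defining $A^{1GD}=A^-AA^{GD}$, and verifying the two defining relations using $A^2=AB$, the index-one commutativity $AA^{GD}=A^{GD}A$, and $A^{GD}A^2=A$. Your explicit conjugate-transpose argument producing the left-sharp analogue of Corollary~\ref{corminus} fills in a detail the paper leaves implicit, but it does not change the method.
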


\begin{remark}
 The relation  $\leq ^{1GD}$ is a partial order on the set $\mathbb{C}^{GM}_n=\{A\in \cnn:ind(A)\leq 1\}$.
\end{remark}

\begin{theorem}
Let $A,B\in\cnn$. Then the following statements are equivalent:
\begin{enumerate}
     \item[(i)] $A<^{1GD}B$.
     \item[(ii)] $A =BA^{-}A=AA^{GD}B$ for some $A^-\in A\{1\}$ and some $A^{GD}\in A\{GD\}$.
     \item[(iii)] $A = BA^{1GD}A=AA^{1GD}B$ for some  $A^{1GD}\in A\{1GD\}$.
     \item[(iv)] there exist idempotents $P,Q\in\cnn$ such that $R(P)=R(A)$, $N(P)=N(AA^{GD})$, $R(Q)=R(A^{-}A)$, $N(Q)=N(A)$ and $A=PB=BQ$ for some $A^-\in A\{1\}$ and some $A^{GD}\in A\{GD\}$.
     \item[(v)] there exist idempotents $P,Q\in\cnn$ such that $R(P)=R(A)$, $N(P)=N(A^{1GD})$, $R(Q)=R(A^{1GD})$, $N(Q)=N(A)$ and $A=PB=BQ$ for some $A^{1GD}\in A\{1GD\}$.
\end{enumerate}
\end{theorem}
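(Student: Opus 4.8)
The plan is to establish the five equivalences exactly as in the dual statement already proved for the GD1 inverse, transporting each argument across the left--right symmetry between $A^{1GD}=A^{-}AA^{GD}$ and $A^{GD1}=A^{GD}AA^{-}$. The engine of the whole proof is the pair of basic identities that the $1GD$ analogue of Theorem~\ref{theorem-1} supplies, namely $A^{1GD}A=A^{-}A$ and $AA^{1GD}=AA^{GD}$, together with the fact that $A^{1GD}\in A\{1,2\}$ and the projector descriptions $AA^{1GD}=P_{R(A),N(AA^{GD})}$ and $A^{1GD}A=P_{R(A^{-}A),N(A)}$. I would first prove the cyclic chain (i)$\Rightarrow$(ii)$\Rightarrow$(iii)$\Rightarrow$(i), then close the remaining conditions through (i)$\Rightarrow$(iv)$\Rightarrow$(i), and finally dispose of (iv)$\Leftrightarrow$(v).

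For (i)$\Rightarrow$(ii), starting from $AA^{1GD}=BA^{1GD}$ and $A^{1GD}A=A^{1GD}B$ and writing $A=AA^{1GD}A$, I would substitute the first identity on the left to get $A=BA^{1GD}A=BA^{-}A$, and the second on the right to get $A=AA^{1GD}B=AA^{GD}B$; this is precisely (ii). For (ii)$\Rightarrow$(iii) one sets $A^{1GD}:=A^{-}AA^{GD}$ and checks $BA^{1GD}A=BA^{-}A=A$ and $AA^{1GD}B=AA^{GD}B=A$ directly. For (iii)$\Rightarrow$(i), multiplying $A=AA^{1GD}B$ on the left by $A^{1GD}$ and using the outer-inverse identity $A^{1GD}AA^{1GD}=A^{1GD}$ yields $A^{1GD}A=A^{1GD}B$, while multiplying $A=BA^{1GD}A$ on the right by $A^{1GD}$ yields $AA^{1GD}=BA^{1GD}$, so the relation $\leq^{1GD}$ holds.

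The block (i)$\Rightarrow$(iv) proceeds by setting $P=AA^{1GD}$ and $Q=A^{1GD}A$, both idempotent by the $\{1,2\}$ property; the projector descriptions give at once $R(P)=R(A)$, $N(P)=N(AA^{GD})$, $R(Q)=R(A^{-}A)$ and $N(Q)=N(A)$, while $A=PB$ and $A=BQ$ follow from the two relation identities as in the chain above. For (iv)$\Rightarrow$(i) I would first combine the range and null-space hypotheses with the uniqueness of a projector determined by its range and kernel to identify $P=AA^{1GD}$ and $Q=A^{1GD}A$ for the $A^{1GD}:=A^{-}AA^{GD}$ built from the given $A^{-}$ and $A^{GD}$; then $BQ=A$ gives $BA^{-}A=A$, whence $BA^{1GD}=BA^{-}AA^{GD}=AA^{GD}=AA^{1GD}$, and $PB=A$ gives $AA^{GD}B=A$, whence $A^{1GD}B=A^{-}AA^{GD}B=A^{-}A=A^{1GD}A$. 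Finally (iv)$\Leftrightarrow$(v) is immediate, since for an outer inverse $R(A^{1GD})=R(A^{1GD}A)=R(A^{-}A)$ and $N(A^{1GD})=N(AA^{1GD})=N(AA^{GD})$, so the two sets of range and null-space constraints coincide.

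The step I expect to be the main obstacle is (iv)$\Rightarrow$(i): it is the only place where one cannot simply read off a computation but must first reconstruct the \emph{concrete} idempotents $P=AA^{1GD}$ and $Q=A^{1GD}A$ from the abstract range/null-space data, which is exactly where the uniqueness of a projector with prescribed range and kernel is needed. Beyond that, the principal care required throughout is bookkeeping: every identity borrowed from the GD1 theorem must be reflected through the left--right swap, so that inner-inverse factors acting on one side for $A^{GD1}$ now act on the opposite side for $A^{1GD}$, and I would verify each substitution against $A^{1GD}A=A^{-}A$ and $AA^{1GD}=AA^{GD}$ to avoid a reversed-factor slip.
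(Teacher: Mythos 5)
Your proposal is correct and takes essentially the same approach the paper intends: the paper omits the proof of this theorem precisely because it is the left--right dual of the GD1 version, and your argument is exactly that dualization --- the cyclic chain (i)$\Rightarrow$(ii)$\Rightarrow$(iii)$\Rightarrow$(i), then (i)$\Rightarrow$(iv)$\Rightarrow$(i) using $P=AA^{1GD}$, $Q=A^{1GD}A$ and the uniqueness of the projector with prescribed range and null space, and (iv)$\Leftrightarrow$(v) via the outer-inverse identities. All the key identities you rely on ($A^{1GD}A=A^{-}A$, $AA^{1GD}=AA^{GD}$, $A^{1GD}\in A\{1,2\}$) are valid, and each substitution checks out.
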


\begin{theorem}\label{1gd-dec}
Let $A\in\cnn$and $A^{GD}\in A\{GD\}$ be respectively written as in \ref{cn} and \ref{gd}. For $A^-\in A\{1\}$, the 1GD inverses of $A$ can be expressed as
\begin{equation*}
 A^{1GD}=P\begin{pmatrix}
C^{-1} & ~~0\\
  N_{\ell} V &  ~~N^- N N^- + N_{\ell} L - N_{\ell} LN_r N_r^-
 \end{pmatrix}P^{-1},
 \end{equation*}
 for arbitrary $V$ and $L$, where $N_{\ell}=I-N^-N$ and $N_r=NN^--I$.
\end{theorem}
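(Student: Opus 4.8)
The plan is to mirror the proof of Theorem~\ref{gd1-deco} step by step, dualizing throughout: where that argument multiplies on the right by $A$ and reads off the constraints $YN=0,\ ZN=N^-N$, here I would multiply on the \emph{left} by $A$ and the two Greville solution formulas get transposed. Concretely, I would write $A^{1GD}=P\begin{pmatrix}X & Y\\ T & Z\end{pmatrix}P^{-1}$ and exploit the facts established for $1GD$ inverses earlier in this section, namely $A^{1GD}\in A\{1,2\}$ (so that $A^{1GD}AA^{1GD}=A^{1GD}$) together with the identity $AA^{1GD}=AA^{GD}$ (the case $m=1$).

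Substituting the decompositions \eqref{cn} and \eqref{gd}, the identity $AA^{1GD}=AA^{GD}$ becomes the block system $CX=I$, $CY=0$, $NT=0$, and $NZ=NN^-$. Since $C$ is nonsingular, the first two equations give $X=C^{-1}$ and $Y=0$, fixing the whole first row of blocks. For the remaining two matrix equations I would invoke \cite[Theorem 1 (page 52)]{greville1974generalized}: the homogeneous equation $NT=0$ has general solution $T=(I-N^-N)V=N_\ell V$ for arbitrary $V$, which is exactly the stated $(2,1)$ block, while for $NZ=NN^-$ the matrix $N^-NN^-$ is a particular solution (because $NN^-NN^-=NN^-$), so its general solution is $Z=N^-NN^-+N_\ell L$ for an as-yet-free parameter $L$.

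The parameter $L$ is then cut down by the outer condition $A^{1GD}AA^{1GD}=A^{1GD}$, which I would compute blockwise. The $(2,1)$ entry reduces to $ZNT$, and I expect this to vanish identically once one uses $N^-NN_\ell=0$ and $NN_\ell=0$, so it imposes no condition on $V$; the $(2,2)$ entry yields $ZNZ=Z$, and expanding with $NZ=NN^-$ leaves the single two-sided constraint $N_\ell L(NN^--I)=0$, i.e. $N_\ell L N_r=0$ with $N_r=NN^--I$. This is precisely where the sign convention differs from the $GD1$ theorem, and it enters naturally rather than by stipulation. Solving $N_\ell L N_r=0$ again by Greville gives $L=M-N_\ell^-N_\ell M N_rN_r^-$, and substituting into $Z=N^-NN^-+N_\ell L$ while using the idempotent simplification $N_\ell N_\ell^-N_\ell=N_\ell$ collapses the expression to $N^-NN^-+N_\ell M-N_\ell M N_rN_r^-$; renaming $M$ as $L$ delivers the stated $(2,2)$ block and completes the proof.

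The main obstacle I anticipate is purely bookkeeping rather than conceptual: one must track carefully which of the idempotents $N_\ell=I-N^-N$ and $NN^-$ absorbs each product (the key identities being $N^-NN^-N=N^-N$, $NN_\ell=0$, $N^-NN_\ell=0$, and $N_\ell N_\ell^-N_\ell=N_\ell$), and one must verify that the vanishing of the $(2,1)$ entry in the outer-inverse computation is genuinely forced and does not sneak in as an extra restriction on $V$. Once these elementary identities are assembled, every remaining step is a direct matrix manipulation analogous to the $GD1$ case.
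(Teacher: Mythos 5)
Your proposal is correct and is precisely the argument the paper intends: the paper omits the proof of Theorem~\ref{1gd-dec}, stating it is analogous to that of Theorem~\ref{gd1-deco}, and your dualization (using $AA^{1GD}=AA^{GD}$ to read off $X=C^{-1}$, $Y=0$, $NT=0$, $NZ=NN^-$, then Greville's theorem plus the outer condition $ZNZ=Z$, $ZNT=0$ to pin down $T=N_\ell V$ and $Z=N^-NN^-+N_\ell L-N_\ell LN_rN_r^-$) is exactly that analogue carried out correctly.
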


\begin{corollary}
Let $A$, $A^{GD}$ and $A^{1GD}$ be respectively as defined in Theorem \ref{1gd-dec}. Then
\begin{enumerate}
     \item[(i)] $A^{1GD}=A^{-,D}$ if and only if $N^-NN^-=(N^-N-I)L+(I-N^-N)L(NN^--I) (NN^--I)^-$.
     \item[(ii)] $A^{1GD}=A^{GD}$  if and only if $(I-N^-N)V=0$.
     \item[(iii)] ${\cal A}\{GD\}\cap{\cal A}\{1GD\} =P\begin{pmatrix}
 C^{-1} & 0\\0 & N^- N N^- +  (I-N^-N)W
\end{pmatrix} P^{-1}$,  where $W$ satisfies $(I-N^- N) W (NN^--I)=0$.
     \end{enumerate}
\end{corollary}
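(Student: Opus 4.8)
The plan is to carry out every comparison in the core-nilpotent coordinates: conjugate by $P$ and read off the four blocks, since $X\mapsto P^{-1}XP$ is a bijection of $\cnn$ that respects the $2\times 2$ partition induced by \eqref{cn}. I would start from the expression for $A^{1GD}$ supplied by Theorem \ref{1gd-dec}, i.e.\ the block matrix with blocks $C^{-1}$, $0$, $N_{\ell}V$ and $Z:=N^-NN^-+N_{\ell}L-N_{\ell}LN_rN_r^-$, where $N_{\ell}=I-N^-N$ and $N_r=NN^--I$, together with the standard decompositions $A^D=P\,\mathrm{diag}(C^{-1},0)\,P^{-1}$ and $A^{GD}=P\,\mathrm{diag}(C^{-1},N^-)\,P^{-1}$. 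Throughout I would use only the inner-inverse identities $NN^-N=N$, the idempotency of $N^-N$ and $NN^-$, and the reflexivity $N_rN_r^-N_r=N_r$.

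For (i), the key remark is that $A^{-,D}=A^-AA^D$ is built from the \emph{same} fixed inner inverse $A^-$ as $A^{1GD}=A^-AA^{GD}$, so their bottom-left blocks coincide. Since $AA^D=P\,\mathrm{diag}(I,0)\,P^{-1}$, post-multiplying the block form of $A^-$ by $\mathrm{diag}(I,0)$ kills its second block column, giving $A^{-,D}=P\begin{pmatrix}C^{-1}&0\\ N_{\ell}V&0\end{pmatrix}P^{-1}$, whose top-left and bottom-left blocks agree with those of $A^{1GD}$ while its bottom-right block is $0$. Hence $A^{1GD}=A^{-,D}$ holds exactly when $Z=0$; rewriting $Z=0$ via $(N^-N-I)L=-N_{\ell}L$ and $N_r=NN^--I$ yields precisely the stated identity $N^-NN^-=(N^-N-I)L+(I-N^-N)L(NN^--I)(NN^--I)^-$.

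For (ii) and (iii) I would invoke the dual of the observation following Theorem \ref{gd1-deco}: the bottom-right block $Z$ of $A^{1GD}$ is always an inner inverse of $N$. Consequently $A^{1GD}$ is block-diagonal, hence of the G-Drazin form $P\,\mathrm{diag}(C^{-1},M)\,P^{-1}$ with $M\in N\{1\}$, if and only if its only off-diagonal block $N_{\ell}V=(I-N^-N)V$ vanishes, which is statement (ii). For (iii) I would intersect the two descriptions: a matrix lies in $\mathcal A\{GD\}\cap\mathcal A\{1GD\}$ iff it is a 1GD inverse with $N_{\ell}V=0$, so its nontrivial block is $Z=N^-NN^-+N_{\ell}L-N_{\ell}LN_rN_r^-$. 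Setting $W:=L(I-N_rN_r^-)$ turns this into $N^-NN^-+N_{\ell}W$, and a one-line check gives $N_{\ell}WN_r=N_{\ell}L(N_r-N_rN_r^-N_r)=0$, i.e.\ $(I-N^-N)W(NN^--I)=0$; conversely every such $W$ is realizable by choosing $L=W$, since then $N_{\ell}WN_rN_r^-=0$ as well. This establishes the claimed description of the intersection.

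The routine part is the block arithmetic; the one place demanding care is the rearrangement in (i) and the substitution $W=L(I-N_rN_r^-)$ in (iii), where one must respect that $N^-$ is only an inner inverse (so $N^-NN^-\neq N^-$ in general) and lean on the idempotents $N_{\ell},N_r$ and the reflexivity $N_rN_r^-N_r=N_r$ rather than any Moore-Penrose-type identity. I would also flag, exactly as in the GD1 corollary, that the equality ``$A^{1GD}=A^{GD}$'' in (ii) is to be read as $A^{1GD}\in\mathcal A\{GD\}$, since the diagonal block $Z$ need not equal the specific fixed $N^-$.
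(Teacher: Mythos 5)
Your proposal is correct and takes essentially the same route as the paper: the paper leaves this corollary unproved (the 1GD section defers to the GD1 methodology, where the analogous corollary rests precisely on the observation that the bottom-right block is an inner inverse of $N$), and your block-by-block comparison in the coordinates of Theorem \ref{1gd-dec} is exactly that intended computation carried out in full. In particular, your reading of (ii) as $A^{1GD}\in\mathcal{A}\{GD\}$, the reduction of (i) to $Z=0$ via the common first block column of $A^{-,D}$ and $A^{1GD}$, and the substitution $W=L(I-N_rN_r^-)$ with the reflexivity $N_rN_r^-N_r=N_r$ in (iii) all match the paper's (implicit) argument.
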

\begin{theorem}
For a fixed $A^-\in A\{1\}$ and $A^{GD}\in A\{GD\}$, consider the decomposition's for $A$ and $A^{1GD}$ respectively as given in Theorem \ref{1gd-dec}. Then the following are equivalent:
\begin{enumerate}
     \item[(i)] $A<^{1GD}B$.
     \item[(ii)] $B =P\begin{pmatrix}
 C & ~0\\-B_4(I-N^-N)VC & ~B_4
\end{pmatrix} P^{-1}$, where $B_4$ satisfies the equalities $B_4(N^- N N^- + (I-N^- N)W)=NN^{-}$, $(N^- N N^- + (I-N^-N)W)B_4=N^-N +(I-N^-N)WN$, and $W$ satisfies $(I-N^- N) W (NN^--I)=0$.
     \end{enumerate}
\end{theorem}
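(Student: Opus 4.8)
The plan is to carry the whole computation into the core-nilpotent coordinates of Theorem~\ref{1gd-dec} and to turn the two defining equalities in (i), namely $AA^{1GD}=BA^{1GD}$ and $A^{1GD}A=A^{1GD}B$, into block identities. First I would write $B=P\begin{pmatrix} B_1 & B_2\\ B_3 & B_4\end{pmatrix}P^{-1}$ conformally with \eqref{cn}, and abbreviate the fixed $A^{1GD}$ as $A^{1GD}=P\begin{pmatrix} C^{-1} & 0\\ N_{\ell}V & Z\end{pmatrix}P^{-1}$, where $N_{\ell}=I-N^-N$ and, writing its bottom-right block as in the statement, $Z:=N^-NN^-+(I-N^-N)W$ with $W$ subject to $(I-N^-N)W(NN^--I)=0$. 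The elementary facts I will invoke throughout are the inner-inverse identity $NN^-N=N$ and its consequences $N(I-N^-N)=0$, $NN^-NN^-=NN^-$, and $N^-NN^-N=N^-N$.

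Next I would expand $A^{1GD}A=A^{1GD}B$ block by block: the $(1,1)$ and $(1,2)$ blocks force $B_1=C$ and $B_2=0$, the $(2,1)$ block gives $ZB_3=0$, and the $(2,2)$ block gives $ZB_4=ZN$. Then I would expand $AA^{1GD}=BA^{1GD}$: once $B_1=C$ and $B_2=0$ hold the top row is automatically satisfied, the $(2,2)$ block gives $NZ=B_4Z$, and the $(2,1)$ block gives $NN_{\ell}V=B_3C^{-1}+B_4N_{\ell}V$.

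The decisive step is the identity $NN_{\ell}=N(I-N^-N)=0$, which annihilates the first term of the $(2,1)$ equation and collapses it to $B_3=-B_4N_{\ell}VC=-B_4(I-N^-N)VC$, precisely the lower-left block displayed in (ii). With this $B_3$ the equation $ZB_3=0$ is then automatic, since $ZB_3=-ZB_4N_{\ell}VC=-ZNN_{\ell}VC=0$ by $ZB_4=ZN$ and $NN_{\ell}=0$; this is why it is absent from the statement. Finally I would expand $Z$ in the two surviving constraints on $B_4$: from $N^-NN^-N=N^-N$ one gets $ZN=N^-N+(I-N^-N)WN$, and from $NN^-NN^-=NN^-$ together with $N(I-N^-N)=0$ one gets $NZ=NN^-$; substituting these into $ZB_4=ZN$ and $B_4Z=NZ$ reproduces verbatim the two equalities required of $B_4$ in (ii).

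For the converse I would run the block algebra backwards: assuming $B$ has the displayed form with $B_4$ and $W$ satisfying the stated relations, I would substitute into $A^{1GD}A$, $A^{1GD}B$, $AA^{1GD}$ and $BA^{1GD}$ and verify the two defining equalities, using once more $NN_{\ell}V=0$, $NZ=NN^-$ and $ZN=N^-N+(I-N^-N)WN$. I expect the main obstacle to be bookkeeping rather than conceptual: one must track which inner-inverse identity collapses which cross term---in particular that $N(I-N^-N)=0$ removes the extra summand in the $(2,1)$ block and renders $ZB_3=0$ redundant---and must match the compact forms $ZN$ and $NZ$ against the fully expanded expressions printed in the statement.
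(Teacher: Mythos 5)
Your proposal is correct: the block computations all check out (the $(1,1)$/$(1,2)$ blocks of $A^{1GD}A=A^{1GD}B$ forcing $B_1=C$, $B_2=0$; the identity $N(I-N^-N)=0$ collapsing the $(2,1)$ block of $AA^{1GD}=BA^{1GD}$ to $B_3=-B_4(I-N^-N)VC$; the redundancy of $ZB_3=0$ via $ZB_4=ZN$; and the expansions $NZ=NN^-$ and $ZN=N^-N+(I-N^-N)WN$ turning $B_4Z=NZ$ and $ZB_4=ZN$ into exactly the two equalities in (ii)). The paper itself gives no proof of this theorem (Section 3 states that proofs mirror the GD1 case, and even the GD1 analogue is stated without proof), but your direct block computation in the core-nilpotent coordinates of Theorem \ref{1gd-dec} is precisely the method the paper sets up and clearly intends.
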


\section*{Disclosure statement}
The authors declare that, they have no relevant or material financial interests that relate to the research described in this paper.

\section*{Funding}

The second author is partially supported by Science \& Engineering Research Board (SERB), Govt. of India (Grant ID SUR/2022/004357). The third author was partially supported by Universidad Nacional de R\'{\i}o Cuarto (Grant PPI 18/C559), Universidad Nacional de La Pampa, Facultad de Ingenier\'ia (Grant Resol. Nro. 135/19), Universidad Nacional del Sur (Grant PGI 24/L108), and  Ministerio de Ciencia e Innovaci\'on of Spain (Grant Red de Excelencia RED2022-134176-T).

\end{document}